\newtheorem{remark}{Remark}[section]
\title{ Online multiscale  model reduction for nonlinear stochastic PDEs with multiplicative noise
	\thanks{L.Jiang acknowledges the support of NSFC 11871378, the
Fundamental Research Funds for the Central Universities and the support by Shanghai Science and Technology Committee 20JC1413500. }
}
\author{	Lijian Jiang\textsuperscript{1}\thanks{School of Mathematical Sciences,  Tongji University, Shanghai 200092, China. ({\tt  ljjiang@tongji.edu.cn}).}
	\and
	Mengnan Li\textsuperscript{1}\thanks{{\color{black}School of Mathematics}, Hunan University, Changsha 410082, China. ({\tt mengnanli@hnu.edu.cn}).}
	\and	
Meng Zhao\textsuperscript{1}\thanks {School of Mathematical Sciences,  Tongji University, Shanghai 200092, China. ({\tt  1910736@tongji.edu.cn}).}
}
\begin{document}

\maketitle

\begin{abstract}
In this paper, an online multiscale  model reduction method is presented for stochastic partial differential equations (SPDEs) with multiplicative noise, where the diffusion coefficient is
spatially multiscale and the noise perturbation nonlinearly depends on the diffusion dynamics.   It is necessary to efficiently compute all
possible trajectories of the stochastic dynamics for quantifying model's uncertainty and statistic moments.  The multiscale diffusion and nonlinearity may cause the computation
very   intractable.  To overcome the multiscale difficulty,  a constraint energy minimizing generalized multiscale finite element method (CEM-GMsFEM) is used to localize the computation and
obtain an effective coarse model. However, the nonlinear terms are still defined on a fine scale space after  the Galerkin projection of CEM-GMsFEM  is applied to the nonlinear SPDEs.
This significantly impacts on the simulation efficiency  by CEM-GMsFEM.  To this end, a stochastic online discrete empirical interpolation method (DEIM) is proposed to treat the stochastic nonlinearity.
The stochastic online DEIM incorporates offline snapshots and online snapshots.   The offline snapshots consist of the nonlinear terms at the approximate mean of the stochastic dynamics and are used to
construct an offline   reduced model. The online snapshots contain  some information of the current new trajectory  and are used to correct
the offline reduced model in an increment manner.   The stochastic online DEIM substantially reduces the dimension of the nonlinear dynamics and enhances the prediction accuracy for the reduced model.
Thus,   the online multiscale  model reduction is constructed  by using CEM-GMsFEM and the stochastic online DEIM.   A  priori error analysis is carried out  for the nonlinear SPDEs.
 We present a few numerical examples with diffusion in heterogeneous porous media and show the effectiveness of the proposed model reduction.

\end{abstract}

\begin{keywords}
nonlinear stochastic PDEs, multiscale  model reduction, online DEIM
\end{keywords}


\pagestyle{myheadings}

\thispagestyle{plain}
\markboth{L. Jiang, M. Li and M. Zhao}{Online multiscale model reduction for nonlinear SPDEs}


\section{Introduction}
Fluid flow in porous media has been an active and attractive research field for the last several decades. Contamination of groundwater, underground oil flow in the petroleum industry and blood flow through capillaries are the  relevant porous media applications.  The study of flows in porous media is of significant importance for ecology, industry, biology, etc. These problems  typically involves multiple  spatial and temporal scales. Multiscale coefficients in these models  are often used to describe the porosity, permeability, diffusion process and so on.  The deterministic multiscale models have been thoroughly studied in lot of literatures \cite{homo1,  porous3,jev07,porous4,porous1,porous2} and reference therein.

 However, many natural phenomena in porous media exhibit inherent randomness, such as the permeability of porous media varying  in an irregular manner \cite{ok}. These phenomena can not be modeled with deterministic PDEs. Therefore, more sophisticated theories and concepts are  needed to take account of   the complex behavior of these systems. Stochastic partial differential equations (SPDEs) can  model those natural systems in a comprehensive manner. Stochastic models in porous media \cite{porous5,porous6,sto-por1} are studied much  less than the deterministic systems, while this kind of stochastic situations abound in real-world applications. In this paper, we consider stochastic parabolic partial differential models  in highly heterogeneous porous media with inherent multiple scales. When the coefficients of noise are relevant with the solution itself, these equations are called SPDEs driven by multiplicative noise, while the noise perturbation, independent of solution, corresponds to additive noise. SPDEs with multiplicative noise  are usually  more complicated  than SPDEs with additive noise. Moreover, nonlinearity of the noise perturbation  may result in a significant  challenge for simulating dynamical systems. To this end,  we develop  a model reduction to take care of  multiple scales and nonlinearity of SPDEs in porous media.

  Many multiscale methods have been developed  to effectively compute multiscale models in the past few years, such as  multiscale finite element methods (MsFEM) \cite{ms5,ms2}, multiscale finite volume method \cite{mfvm}, heterogeneous
multiscale methods \cite{hms}, mixed multiscale finite element methods  \cite{mixms2,mixms3,mixms1}, variational multiscale methods \cite{vms1,vms2,var1}, mortar multiscale methods \cite{mms1,mms2}, and generalized multiscale finite element method (GMsFEM) \cite{GMs2,GMs1}, localization  orthogonal decomposition \cite{mp14}, constraint energy minimizing generalized multiscale finite element method (CEM-GMsFEM)\cite{Eric2017CEM,Mengnan2019}.

In the paper, we use CEM-GMsFEM to solve multiscale SPDEs.  The construction  of CEM-GMsFE basis functions is usually divided into two stages. Firstly, we solve a local eigenproblem to get a set of auxiliary basis functions on each coarse block. Secondly, we solve a constraint energy minimization problem,  where the constraints are involved with auxiliary basis functions. When  CEM-GMsFEM is applied to nonlinear problems, the descretization system is still nonlinear and is defined in a fine grid space.  The evaluation of the nonlinear terms, as well as computation of the residual and the Jacobian matrix, are still all required on the fine grid, when we derive  the full discretization scheme by  a stable temporal implicit method.
Moreover, nonlinearity may lead to a slow convergence for iteration methods (e.g., Newton method) to solve the nonlinear problems in a high dimensional space.
Thus, the nonlinearity may bring a  significant computation challenge for solving  nonlinear multiscale SPDEs.

 We focus  on the nonlinear systems stemming from the spatial  discretization using  CEM-GMsFEM and seek an effective  model reduction method to improve the computation efficiency.  Proper orthogonal decomposition (POD) is one of popular  model reduction  methods  and  constructs a set of basis vectors with solution snapshots. These basis vectors span a low-dimensional reduced space which the full-order system is projected onto. But POD still needs the evaluation of nonlinear functions at all states, which  makes solving the reduced system as computationally expensive as solving the original high-dimensional problem. Thus POD is not appropriate to straightforwardly
   treat the nonlinear problems. To overcome the drawback of POD,  the discrete version of the empirical interpolation method \cite{eim}, DEIM \cite{deim-points,deim,o-deim} has been  proposed to effectively reduce the computation complexity of nonlinear problems.

 Typically, DEIM includes two phases. The first phase  is the offline phase where a reduced system is constructed using the solution of full-order system. The second phase  is the online phase in which we apply the precomputed reduced system to approximate the solution of new problem at hand. The offline phase computes the basis matrix and interpolation matrix by applying POD to a set of nonlinear snapshots, while the online phase uses the obtained  matrixes  to interpolate online nonlinear functions in a low dimension space.  The traditional DEIM  works well  for deterministic nonlinear  systems, but it may be not  sufficient for stochastic dynamical systems.  The solution of SPDE is a stochastic process,  which includes all  trajectories corresponding to different samples of noise. If we apply DEIM to the mean of snapshots  associated with  trajectories in the offline phase and adopt the reduced system for any new trajectory of solution, there may exist a large error because the precomputed reduced system does not contain any information from the new trajectory. Of course,  one can implement  DEIM individually  for each new trajectory. But it costs much online computation and is not suitable for the many-query applications such as  uncertainty quantification and the computation of statistics.
To this end, we propose a stochastic online DEIM to reduce the stochastic models. The proposed online DEIM also  includes two phases: offline phase and online phase.
In the offline phase, we utilize  a sort of  mean information of nonlinear functions   as the snapshots, and adopt DEIM to construct  the offline basis matrix and interpolation matrix.
In the online phase, the newly generated data ( evaluation of the nonlinear functions  along the new trajectory) is used to update the offline basis matrix in an increment manner.  The stochastic online DEIM
incorporates the precomputed model (mean of old trajectories)  and  the new trajectory information into the model reduction.
This approach can improve the fitness and prediction of the reduced model.
 A multiscale stochastic model reduction for SPDEs is constructed  by combining CEM-GMsFEM with stochastic online DEIM, and is referred to as  Online DEIM-MS in the paper.

This article is organized  as follows.  Section \ref{pre} is to introduce  some preliminary knowledge of SPDEs and weak formulations. In Section \ref{cem}, we
present   CEM-GMsFE  for SPDEs and give a priori error analysis for nonlinear SPDEs with multiplicative noise.
The stochastic online DEIM is addressed in  Section \ref{reduct}.
 In Section \ref{num}, a few numerical examples in porous media   are  presented to show the applicability and effectiveness of  Online DEIM-MS.
Some  conclusions are made finally.


\section{Preliminaries}\label{pre}

Let $D$ be a bounded spatial domain in $\mathbb{R}^{d}$ ($d=2$),  $\,T>0$  a fixed time, $H := L^2(D) $  a Hilbert space with norm $\|\cdot\|_H$. In this work, we consider the following nonlinear  stochastic  partial differential problem: find a $H$-valued stochastic process $u(t)$ to solve  the  SPDE with a multiplicative noise
\begin{equation}\label{ex-eq1}
\left\{
\begin{aligned}
du - \nabla \cdot \big(\kappa(x) \nabla u\big)dt&=f(u)dt+g(u)dW(t) \quad in\quad D \times (0,T] ,\\
u(x,t)&=0 \quad\qquad \qquad\quad\quad\quad\quad on\quad \partial D\times (0,T],\\
u(x,0)&=u_{0}(x) \quad\quad\quad\quad\quad\quad\quad\, in\quad D,
\end{aligned}
\right.
\end{equation}
or formally
\begin{equation}\label{ex-eq2}
\left\{
\begin{aligned}
u_t - \nabla \cdot \big(\kappa(x) \nabla u\big)&=f(u)+g(u)\xi(t) \quad in\quad D \times (0,T] ,\\
u(x,t)&=0 \quad\qquad \qquad\quad\quad on\quad \partial D\times (0,T],\\
u(x,0)&=u_{0}(x) \quad\quad\quad\quad\quad\, in\quad D,
\end{aligned}
\right.
\end{equation}
where $u_0(x)$ is the initial condition, $\kappa(x)$ is  a multiscale diffusion coefficient, $f$ and $g$ are $H$-valued nonlinear functionals, $W(t)$ is a $H$-valued $\mathcal{Q}$-Wiener process defined in a filtered probability space $(\Omega,\mathcal{F},\mathbb{P},{\left\{\mathcal{F}_t\right\}}_{t\geq 0})\cite{ok},$ and $\xi:=\frac{dW}{dt}$ is $H$-valued noise. {\color{black}The filtered probability space is assumed  to fulfill the usual conditions \cite{ito-integral0}. In the triple $(\Omega,\mathcal{F},\mathbb{P})$,  $\Omega$ is the sample space,  $\mathcal{F}$ is the $\sigma$-algebra of subset of $\Omega$, and a probability measure $\mathbb{P}$ on $(\Omega,\mathcal{F})$ is a function $\mathbb{P}:\mathcal{F}\rightarrow[0,1]$.} $\kappa(x)$ refers  to permeability and
is often heterogeneous in practical models.
In this paper, we follow the conventional notation, and suppress the dependence on space and write $u(t)$ for $u(t,x)$ and $W(t)$ for $W(t,x)$. 
{\color{black}Equation (\ref{ex-eq1}) can be viewed as a mathematical model
for the dynamics of diffusion-reaction flows driven by the stochastic perturbation $g(u)dW(t)$ in natural formations such as aquifers. In the modeling of solute transport problem in porous media, $u$ represents the solute concentration, while the dispersion of  the solute is primarily affected by the spatial heterogeneity or porosity \cite{sto-por1}.}

\par We assume that the covariance operator
$\mathcal{Q}: H \rightarrow H$ is non-negative, symmetric and of trace class, i.e., {\color{black} the series $Tr(\mathcal{Q}):=\sum_{i\in \mathbb{N}}(\mathcal{Q}\eta_i,\eta_i)_{H}$ converges, where $\{\eta_i\}_{i\in \mathbb{N}}$ is an arbitrary orthonormal basis of $H$, and $Tr(\mathcal{Q}) <\infty$}. Then the $\mathcal{Q}$-Wiener process \cite{QWiener,QWiener1} $W(t)$ can be represented as
\begin{equation}\label{KLE}
W(t)=\sum_{i\in \mathbb{N}} \sqrt{{\color{black}\mu_i}}e_i\beta_i(t), \quad t\in\left[0,T\right],
\end{equation}
where ${\color{black}\mu_i} \geq 0,\,e_i$ ($i\in \mathbb{N}$) are respectively the eigenvalues and the eigenfunctions of the covariance operator $\mathcal{Q}$ in $H$, and $\beta_i(t)$ are independent and identically distributed standard $\mathcal{F}_t$-Brownian motions.

For analysis, we review  some basic properties about the stochastic It$\hat{o}$ integral for integrable stochastic process with respect to $\mathcal{Q}$-Wiener process. {\color{black} Let  $L_2^0:=HS(\mathcal{Q}^{1/2}H,H)$
be the space of Hilbert-Schmidt operators from $\mathcal{Q}^{1/2}H$ to $H$, 
endowed with $\|\cdot\|_{L_2^0}$ norm, which is defined by
\[
\|\phi\|_{L_2^0}:=\|\phi\mathcal{Q}^{1/2}\|_{HS}=\left(\sum_{i=1}^{\infty}\|\phi\mathcal{Q}^{1/2}\eta_i\|_H^2   \right)^{1/2}, \quad \forall \phi \in L_2^0,
\]
where $\{\eta_i\}_{i=1}^{\infty}$ is an orthonormal basis in $H$. The space $L_2^0$ is separable.}  We refer to \cite{QWiener,ito-integral,ito-integral0} for the detailed introduction to these definitions.
 In this paper, we assume that the integrability of all $L_2^0$-valued stochastic processes is satisfied.
\begin{proposition}\cite{ito-integral0}\label{ito integral} (Martingale)
	For any $L_2^0$-valued integrable stochastic process $\phi$, the stochastic process
	\begin{equation*}
	M(t):=\int_{0}^{t}\phi dW(\tau)=0, \quad t\in[0,T],
	\end{equation*}
	is a continuous, square integrable martingale with
	\begin{equation}
	\boldsymbol{E}\left[M(t)\right]=0.
	\end{equation}
\end{proposition}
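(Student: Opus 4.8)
The plan is to follow the standard construction of the stochastic It\^o integral with respect to the $\mathcal{Q}$-Wiener process and to verify the three assertions first on elementary processes and then pass to the limit. First I would recall that an elementary $L_2^0$-valued process has the form $\phi(\tau)=\sum_{j=0}^{N-1}\Phi_j\mathbf{1}_{(t_j,t_{j+1}]}(\tau)$ with $0=t_0<t_1<\cdots<t_N=T$ and each $\Phi_j$ an $\mathcal{F}_{t_j}$-measurable $L_2^0$-valued random variable taking finitely many values; for such $\phi$ the integral is the finite sum $M(t)=\sum_{j}\Phi_j\bigl(W(t_{j+1}\wedge t)-W(t_j\wedge t)\bigr)$, which is manifestly a continuous $H$-valued process.

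For elementary $\phi$ the properties are checked directly. Since the increment $W(t_{j+1}\wedge t)-W(t_j\wedge t)$ is independent of $\mathcal{F}_{t_j}$ and has zero mean, every summand has zero expectation, so $\boldsymbol{E}[M(t)]=0$; conditioning on $\mathcal{F}_s$ for $s\leq t$ and splitting the sum at the subinterval containing $s$ yields $\boldsymbol{E}[M(t)\mid\mathcal{F}_s]=M(s)$, i.e. the martingale property. Square integrability follows from the It\^o isometry $\boldsymbol{E}\|M(t)\|_H^2=\boldsymbol{E}\int_0^t\|\phi(\tau)\|_{L_2^0}^2\,d\tau$, obtained by expanding the square of the finite sum: the cross terms vanish by the same independence-and-zero-mean argument, while each diagonal term satisfies $\boldsymbol{E}\|\Phi_j(W(t_{j+1})-W(t_j))\|_H^2=(t_{j+1}-t_j)\,\boldsymbol{E}\|\Phi_j\|_{L_2^0}^2$, which in turn follows from the expansion (\ref{KLE}) together with $\boldsymbol{E}[\beta_i(s)\beta_k(s)]=\delta_{ik}s$.

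Next I would extend to a general integrable $L_2^0$-valued process $\phi$ by density: elementary processes are dense in the space of predictable processes equipped with the norm $\bigl(\boldsymbol{E}\int_0^T\|\phi\|_{L_2^0}^2\,d\tau\bigr)^{1/2}$, so one picks elementary $\phi_n\to\phi$ in that norm; by the isometry the corresponding integrals $M_n(t)$ form a Cauchy sequence in $L^2(\Omega;H)$, uniformly in $t$, and $M(t)$ is defined as the limit. Linearity and $L^2$-continuity of conditional expectation transfer the identities $\boldsymbol{E}[M(t)]=0$, square integrability, and $\boldsymbol{E}[M(t)\mid\mathcal{F}_s]=M(s)$ from the $M_n$ to $M$.

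The main technical point, and the step I would treat most carefully, is the continuity of the sample paths. For this I would apply Doob's $L^2$ maximal inequality to the $H$-valued martingale $M_n-M_m$, giving $\boldsymbol{E}\sup_{t\leq T}\|M_n(t)-M_m(t)\|_H^2\leq 4\,\boldsymbol{E}\int_0^T\|\phi_n-\phi_m\|_{L_2^0}^2\,d\tau$. Passing to a subsequence along which the right-hand side decays geometrically and invoking the Borel--Cantelli lemma, the $M_n$ converge uniformly on $[0,T]$ almost surely; since each $M_n$ is continuous, the limit admits a continuous modification, which we take as $M$. This establishes all the claimed properties.
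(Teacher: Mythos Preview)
Your argument is correct and follows the standard construction of the It\^o integral in Hilbert space (elementary processes, It\^o isometry, density, Doob's maximal inequality for path continuity), which is precisely the route taken in the cited reference~\cite{ito-integral0}. Note, however, that the paper itself does not supply a proof of this proposition: it is stated as a quoted result from Da~Prato--Zabczyk and used as a black box, so there is no ``paper's own proof'' to compare against beyond the citation.
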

\begin{proposition}\cite{ito-integral0} (It$\hat{o}$ isometry)  For any $L_2^0$-valued integrable stochastic process $\phi$,
	\begin{equation}
	\boldsymbol{E}\left\|\int_{0}^{t}\phi dW(s)\right\|^2=\boldsymbol{E}\int_{0}^{t}\|\phi\|_{L_2^0}^2\,ds.
	\end{equation}
\end{proposition}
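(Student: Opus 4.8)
The plan is to establish the identity first for elementary integrands and then to extend it to the general case by a density argument, the standard route for constructing the stochastic integral against a $\mathcal{Q}$-Wiener process. First I would take $\phi$ of the elementary form $\phi(s)=\sum_{k=0}^{m-1}\Phi_k\mathbf{1}_{(t_k,t_{k+1}]}(s)$ on a partition $0=t_0<t_1<\cdots<t_m=t$, where each $\Phi_k$ is a bounded, $\mathcal{F}_{t_k}$-measurable, $L_2^0$-valued random variable; for such $\phi$ the integral is, by definition, $\int_0^t\phi\,dW(s)=\sum_{k=0}^{m-1}\Phi_k\big(W(t_{k+1})-W(t_k)\big)$.

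Next I would expand $\boldsymbol{E}\big\|\sum_{k}\Phi_k(W(t_{k+1})-W(t_k))\big\|^2$ into diagonal and off-diagonal contributions. An off-diagonal term with $j<k$ vanishes after taking expectation: conditioning on $\mathcal{F}_{t_k}$ makes $\Phi_j$, $\Phi_k$ and $W(t_{j+1})-W(t_j)$ measurable, while $W(t_{k+1})-W(t_k)$ is independent of $\mathcal{F}_{t_k}$ with mean zero by Proposition~\ref{ito integral}. For a diagonal term I would use the covariance structure of the $\mathcal{Q}$-Wiener process: inserting the representation~(\ref{KLE}), $W(t_{k+1})-W(t_k)=\sum_i\sqrt{\mu_i}\,e_i\big(\beta_i(t_{k+1})-\beta_i(t_k)\big)$ with $\sqrt{\mu_i}\,e_i=\mathcal{Q}^{1/2}e_i$, and using that the scalar increments are independent of $\mathcal{F}_{t_k}$, mutually independent, and of variance $t_{k+1}-t_k$, one obtains $\boldsymbol{E}\big[\|\Phi_k(W(t_{k+1})-W(t_k))\|_H^2\mid\mathcal{F}_{t_k}\big]=(t_{k+1}-t_k)\sum_i\|\Phi_k(\mathcal{Q}^{1/2}e_i)\|_H^2=(t_{k+1}-t_k)\|\Phi_k\|_{L_2^0}^2$, the last equality being just the definition of $\|\cdot\|_{L_2^0}$ with $\{e_i\}$ as orthonormal basis of $H$. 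Summing over $k$ then yields $\boldsymbol{E}\|\int_0^t\phi\,dW\|^2=\sum_{k}(t_{k+1}-t_k)\,\boldsymbol{E}\|\Phi_k\|_{L_2^0}^2=\boldsymbol{E}\int_0^t\|\phi\|_{L_2^0}^2\,ds$, i.e.\ the isometry for elementary $\phi$.

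For a general integrable $L_2^0$-valued process $\phi$ I would invoke the density of elementary integrands, i.e.\ the existence of a sequence $(\phi_n)$ of elementary processes with $\boldsymbol{E}\int_0^t\|\phi_n-\phi\|_{L_2^0}^2\,ds\to0$. By the elementary-process isometry, $\big(\int_0^t\phi_n\,dW\big)_n$ is Cauchy in $L^2(\Omega;H)$, and its limit is by definition $\int_0^t\phi\,dW$; passing to the limit in $\boldsymbol{E}\|\int_0^t\phi_n\,dW\|^2=\boldsymbol{E}\int_0^t\|\phi_n\|_{L_2^0}^2\,ds$, using continuity of the $L^2(\Omega;H)$-norm on the left and of the $L^2(\Omega\times[0,t];L_2^0)$-norm on the right, yields the stated identity.

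The step I expect to be the main obstacle is the diagonal-term computation, where one must match the second moment of $\Phi_k(W(t_{k+1})-W(t_k))$ exactly with the Hilbert--Schmidt norm $\|\Phi_k\mathcal{Q}^{1/2}\|_{HS}$: this requires interchanging expectation with the infinite sum over the eigenmodes of $\mathcal{Q}$ (legitimate by Tonelli, since the summands are nonnegative) together with a careful use of the $\mathcal{F}_{t_k}$-measurability of $\Phi_k$ and of the independence of the Brownian increments from $\mathcal{F}_{t_k}$. Once this and the standard approximation lemma for progressively measurable $L_2^0$-valued processes are granted, the remaining steps are routine.
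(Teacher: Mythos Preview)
Your proof sketch is the standard and correct approach to the It\^{o} isometry for $\mathcal{Q}$-Wiener processes: verify the identity for elementary integrands by exploiting adaptedness, independence of increments, and the eigenexpansion~(\ref{KLE}) to reduce the diagonal terms to the $L_2^0$-norm, and then extend by density. There is nothing to compare against here, however: the paper does not prove this proposition at all but simply quotes it from Da~Prato and Zabczyk~\cite{ito-integral0}, so your write-up in fact supplies more detail than the paper itself.
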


Suppose that the  coefficient $\kappa(x)$ is uniformly positive and bound in $\Omega$, i.e., there exists $0< \kappa_0 < \kappa_1<\infty$ such that $\kappa_0\leq\kappa(x)\leq \kappa_1$, but the ratio $\kappa_1/\kappa_0$ can be large.  Let operator  $\mathcal{A}:= -\nabla \cdot(\kappa(x)\nabla \cdot)$ and space
{\color{black}
\[
\mathcal{H}:=L^2\big(\Omega,L^2(D\times [0,T])\big)
\]
}
equipped with norm $||\cdot||_{\mathcal{H}}$.
We make the following assumptions to ensure  a unique mild solution of the equation (\ref{ex-eq1}) or (\ref{ex-eq2}).
\begin{itemize}
	\item[A.1] $f: H \rightarrow H  \, \text{ is globally Lipschitz continuous and of linear growth,} \ \  \text{i.e.,}$
	\begin{equation}
	\begin{aligned}
	\left\|f(u)-f(v)\right\|_H&\leq L_f\left\|u-v\right\|_{H},\\
	\left\|f(w)\right\|_H&\leq C_f(1+\|w\|_H), \quad \forall u,v,w \in H.
	\end{aligned}
	\end{equation}
	 Here $f \text{ is the  so-called Nemytskii operator defined for } H.$
	\item[A.2] $g: H \rightarrow L_0^2  \,\text{ is also globally Lipschitz continuous and of linear growth,} \ \ \text{i.e.,}$
	\begin{equation}
	\begin{aligned}
	\left\|g(u)-g(v)\right\|_{L^0_2}&\leq L_g\left\|u-v\right\|_{H},\\
	\left\|g(w)\right\|_{L^0_2}&\leq C_g(1+\left\|w\right\|_{H}), \quad \forall u,v,w \in H.
	\end{aligned}
	\end{equation}
	
	\item[A.3] $u_0 \in L^2(\Omega,L^2(\mathcal{D}(-\mathcal{A})^{1/2}))\,\text{ is measurable, }
	\text{i.e., }\, \boldsymbol{E}\left\|\nabla u_0\right\|^2< +\infty.$
	
\end{itemize}

\begin{theorem}\cite{mild-weak} Assume that A.1-A.3 hold. Then the equation (\ref{ex-eq1}) or (\ref{ex-eq2}) admits  unique mild solution in the form
	\begin{equation}\label{ex-eq3}
	u(t)=E(t)u_0+\int_{0}^{t}E(t-s)f(u(s))ds+\int_{0}^{t}E(t-s)g(u(s))dW(s),\, t\in [0,T],
	\end{equation}
	where $E(t):=e^{-t\mathcal{A}},t\ge 0$, is an analytic semigroup of the linear operator $-\mathcal{A}$, such that $\sup_{t\in[0,T]}\boldsymbol{E}\left\|u(t)\right\|_{H}^2 < +\infty$.
\end{theorem}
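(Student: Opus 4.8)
The plan is to apply a standard fixed-point (Banach contraction) argument in the space $\mathcal{H}' := C\big([0,T]; L^2(\Omega,H)\big)$ of $H$-valued adapted processes with finite $\sup_{t}\big(\boldsymbol{E}\|u(t)\|_H^2\big)^{1/2}$, which is where the mild solution formula \eqref{ex-eq3} naturally lives. First I would define the map $\Phi$ on this space by letting $\Phi(u)(t)$ be the right-hand side of \eqref{ex-eq3}, i.e.\ the sum of the deterministic semigroup term $E(t)u_0$, the Lebesgue convolution $\int_0^t E(t-s)f(u(s))\,ds$, and the stochastic convolution $\int_0^t E(t-s)g(u(s))\,dW(s)$. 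The goal is to show $\Phi$ maps $\mathcal{H}'$ into itself and is a contraction after passing to an equivalent norm (weighting by $e^{-\lambda t}$) or after iterating $\Phi$ a finite number of times.

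The key steps, in order: (1) \emph{Well-definedness.} Use the analyticity and contractivity of the semigroup $E(t)=e^{-t\mathcal{A}}$ — in particular $\|E(t)\|_{\mathcal{L}(H)}\le 1$ since $\mathcal{A}$ is positive — together with assumption A.3 to bound $\boldsymbol{E}\|E(t)u_0\|_H^2$; then use the linear-growth bounds in A.1 and A.2 to control the two convolution integrals. For the stochastic convolution this is where the It\^o isometry (the second Proposition) enters: $\boldsymbol{E}\|\int_0^t E(t-s)g(u(s))\,dW(s)\|_H^2 = \boldsymbol{E}\int_0^t \|E(t-s)g(u(s))\|_{L_2^0}^2\,ds \le \boldsymbol{E}\int_0^t \|g(u(s))\|_{L_2^0}^2\,ds$, and then A.2's linear growth plus $\sup_s\boldsymbol{E}\|u(s)\|_H^2<\infty$ closes the estimate. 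The Lebesgue term is handled by Cauchy--Schwarz in time and A.1. Continuity in $t$ of $t\mapsto\Phi(u)(t)$ in $L^2(\Omega,H)$ follows from strong continuity of the semigroup and dominated convergence; the martingale property (first Proposition) gives $\boldsymbol{E}[\Phi(u)(t)]$ well-behaved and zero mean for the stochastic part. (2) \emph{Contraction.} For $u,v\in\mathcal{H}'$, subtract: the $u_0$ terms cancel, and the difference of the $f$-terms is estimated by the Lipschitz constant $L_f$ via Cauchy--Schwarz, while the difference of the $g$-terms uses It\^o isometry followed by the Lipschitz constant $L_g$ from A.2. This yields $\boldsymbol{E}\|\Phi(u)(t)-\Phi(v)(t)\|_H^2 \le C(L_f^2,L_g^2)\int_0^t \boldsymbol{E}\|u(s)-v(s)\|_H^2\,ds$. (3) \emph{Conclusion.} Either introduce the weighted norm $\|u\|_\lambda := \sup_t e^{-\lambda t}\big(\boldsymbol{E}\|u(t)\|_H^2\big)^{1/2}$ and choose $\lambda$ large so $\Phi$ is a strict contraction, or observe that the integral inequality forces $\Phi^n$ to be a contraction for $n$ large (the constant behaves like $(Ct)^n/n!$); in either case Banach's fixed-point theorem gives a unique $u$ with $\Phi(u)=u$, which is precisely the mild solution \eqref{ex-eq3}, and the a priori bound from step (1) gives $\sup_{t\in[0,T]}\boldsymbol{E}\|u(t)\|_H^2<+\infty$.

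Since this is essentially a citation of \cite{mild-weak}, I would keep the argument brief and emphasize only the structural points: the role of semigroup contractivity, the two Propositions (martingale and It\^o isometry), and assumptions A.1--A.3. The main obstacle — really the only nontrivial point — is the careful handling of the \emph{stochastic convolution}: unlike the deterministic convolution it is not covered by the classical semigroup theory, one must verify the $L_2^0$-integrability of $s\mapsto E(t-s)g(u(s))$ so that the It\^o integral is defined, and control its second moment uniformly in $t$; the factorization method of Da Prato--Zabczyk is the standard device if one wants continuity of paths rather than merely continuity in mean square, though for the stated conclusion ($\sup_t\boldsymbol{E}\|u(t)\|_H^2<\infty$) the direct It\^o-isometry estimate suffices. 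A secondary technical point is checking that $g$ as a Nemytskii-type operator indeed maps into $L_2^0$ with the stated bounds, but this is exactly what A.2 postulates, so no work is needed there.
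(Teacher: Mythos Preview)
The paper does not prove this theorem at all: it is stated with a citation to \cite{mild-weak} and no argument is given in the text. Your proposal correctly identifies this and supplies the standard Banach fixed-point argument in $C([0,T];L^2(\Omega,H))$, using semigroup contractivity, the It\^o isometry, and the Lipschitz/linear-growth assumptions A.1--A.3; this is precisely the approach in the cited reference (and in Da~Prato--Zabczyk \cite{ito-integral0}), so your sketch is both correct and aligned with the intended source.
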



By  Theorem 9.15 in \cite{mild-weak},  the existence and uniqueness of the mild solution is equivalent to the existence and uniqueness of the  weak solution  under the assumptions A.1-A.3, i.e.,  there exists a unique weak solution $u(\cdot,t,\omega)\in H$ such that $\sup_{t\in[0,T]}\boldsymbol{E}\left\|u(t)\right\|_{H}^2 < +\infty$, and for $\forall v\in H_0^1, \, t\in [0,T]$, we have
\begin{equation}\label{weak_sol}
(u(t),v)=(u_0,v)-\int_{0}^{t}(\kappa(x)\nabla u(s),\nabla v)ds+\int_{0}^{t}(f(u(s)),v)ds+\int_{0}^{t}(g(u(s))dW(s),v),
\end{equation}
where
\[
\int_{0}^{t}(g(u(s))dW(s),v):=\sum_{i=1}^{\infty}\int_0^t(g(u(s))\sqrt{{\color{black}\mu_i}}e_i,v)d\beta_i(s).
\]
Thus we get the variational form of (\ref{ex-eq2}),
\begin{equation}\label{eq-weak}
\left\{
\begin{aligned}
(u_t,v) + a(u,v)& = (f(u),v)+(g(u)\xi(t),v) \quad \forall v\in H_0^1, \, t> 0,\\
\big(u(\cdot,0), v\big)&=(u_{0},v) \quad \forall v\in H_0^1,
\end{aligned}
\right.
\end{equation}
where
\[
a(u,v) = (\kappa \nabla u,\nabla v)=\int_{D} \kappa \nabla u \cdot \nabla v, \quad (f,v)=\int_{D}fv.
\]
The first aim of this paper is to carry out a CEM-MsFE convergence analysis  for the equation  (\ref{ex-eq1}). To this end, we construct a  multiscale finite element  space $V_{ms} \subset H_0^1$. The CEM-GMsFE
solution  $u(\cdot,t)\in V_{ms}$ solves
\begin{equation}\label{eq:scheme}
\left\{
\begin{aligned}
\big((u_{ms})_t,v\big) + a(u_{ms},v)& = (f(u_{ms}),v)+(g(u_{ms})\xi(t),v) \quad \forall v\in V_{ms},\, t> 0,\\
\big(u_{ms}(\cdot,0), v\big)&=(u_{0,ms},v) \quad \forall v\in V_{ms},
\end{aligned}
\right.
\end{equation}
where $u_{0,ms}\in V_{ms}$ is the $L^2$ projection of $u_0$ onto $V_{ms}$.

\section{CEM-GMsFE method for SPDE} \label{cem}
In this section, we first present the construction for multiscale basis functions. Then a priori error analysis is carried out for the SPDE (\ref{ex-eq1})
in CEM-GMsFEM.

\subsection{Construction of multiscale basis functions}

This subsection will follow the  works \cite{GMs2,Eric2017CEM,GMs1,Mengnan2019}, and focus on the construction of the CEM-GMsFEM basis functions for the equation (\ref{ex-eq1}). The multiscale finite element  space is constructed by solving  constrained energy minimization problems.

Let $\mathcal{T}_{h}$ be a coarse gird partition of the space domain $\Omega$ with $N_c$ nodes, $N$ elements, and $h$  the size of a generic  coarse element. Figure \ref{coarsegrid} shows the fine gird, the coarse grid $K_i$ and oversampling domain {\color{black}$K_i^{m}$} by enlarging  coarse layers ($m=2$).
\begin{figure}[hbtp]
	\centering
	\includegraphics[width=3.8in, height=3in]{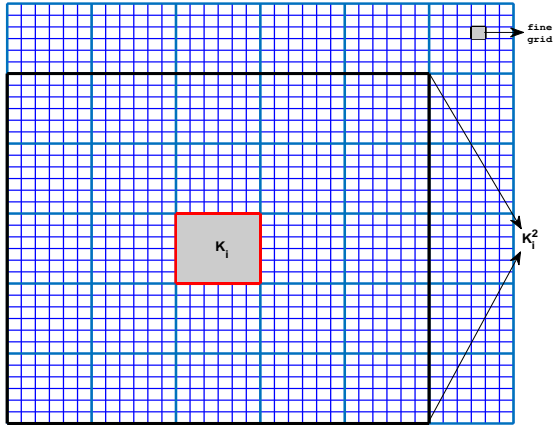}
	\caption{The fine gird, coarse grid $K_i$, oversampling domain $K_i^{2}$}\label{coarsegrid}
\end{figure}

\par For each coarse block $K_i\in \mathcal{T}_{h}$, we denote $V(K_i)\,:=\, H^1(K_i)$, and consider a set of auxiliary basis functions $\{{\phi}_j^{(i)}\}_{j=1}^{L_i}\in V(K_i) $ by solving the spectral problem
\begin{equation}
a_i({\phi}_j^{(i)},v)={\lambda}_j^{(i)}s_i({\phi}_j^{(i)},v),\quad \forall v\in V(K_i),
\end{equation}
 where $s_i(\phi_j^{(i)},v)=\int_{K_i} \tilde{\kappa}uv$, and $\tilde{\kappa}=\kappa \sum_i |\Delta {\chi}_i|^2$,  $\{{\chi}_i\}$ is a set of partition of unity functions for the coarse partition. We collect the first $L_i$ eigenfunctions corresponding to the first $L_i$ smallest eigenvalues for the local auxiliary multiscale space $V_{aux}^{(i)}=\text{span} \{\phi_j^{(i)}|1\leq j\leq L_i\}$ and define the global auxiliary space $V_{aux}$ by the  direct sum of these local spaces,
\[
V_{aux}=\bigoplus_{i=1}^N V_{aux}^{(i)}.
\]
Let the oversampling domain $K_{i}^m\subset \Omega $ be  the extension  of $K_i$ with $m$ coarse grid layers (as in Figure \ref{coarsegrid}). The  CEM-MsFE basis functions  $\varphi_{j,ms}^{(i)}\in H_0^1(K_i^m)$  are constructed through solving the local problems
\begin{equation}\label{localbasis}
\varphi_{j,ms}^{(i)}= \arg\min\left\{\,a(\varphi,\varphi)\,|\,\varphi \in H_0^1(K_i^m),\,\varphi \text{ is } \phi_j^{(i)}\text{-orthogonal}\right\},
\end{equation}
where $\varphi \in V$ is $\phi_j^{(i)}\text{-orthogonal}$ when
\begin{equation*}
s(\varphi, \phi_j^{(i)})=1 \quad \text{ and } \quad s(\varphi, \phi_{j'}^{(i')})=0, \quad \text{if } j'\neq j \text{ or } i'\neq i,
\end{equation*}
with $s(u,v)=\sum_{i=1}^Ns_i(u,v).$
Thus, we define the multiscale finite element space for CEM-GMsFEM by
\[
V_{ms}=\text{ span }\left\{ \varphi_{j,ms}^{(i)}\,|1\leq j\leq L_i,\, 1\leq i\leq N\right\}.
\]
{\color{black}
	\begin{remark}
		 Lemma 5 in \cite{Eric2017CEM} implies that  more  layers $m$ of the oversampling domain $K_i^m$ can  improve the  approximation accuracy  of the multiscale method. But the multiscale basis functions decay to zero on the oversampling domain as more layers are utilized \cite{Eric2017CEM,Mengnan2019}.  Thus, we may use a small number of layers
for the oversampling domain to balance the approximation accuracy and computation efficiency.
	\end{remark}
}

\subsection{A priori error analysis}
In this subsection, we approximate the solution $u(x,t)$ of the problem (\ref{eq-weak}) by a function $u_{ms}\in V_{ms}$ for each $t>0$.
First of all, we provide the error estimates of CEM-GMsFEM for the SPDE with multiplicative noise. To this end,  we need  some preliminary lemmas.

\begin{lemma}\label{lem14}
	Let $u$ be the solution of the stochastic parabolic  equation (\ref{ex-eq1}). Then
	\begin{equation}\label{ut}
	\boldsymbol{E}\|u_{t}\|^{2}_{L^2(D\times [0,T])}\leq C\bigg(\boldsymbol{E}\|u_{0}\|_a^{2}+\|f(u)\|^{2}_{\mathcal{H}} \bigg),
	\end{equation}
	{\color{black}where $\|u\|_{a}^{2}:=\int_{D}\kappa|\nabla u|^{2}$ is the energy norm.}
	\begin{proof}
		By (\ref{eq-weak}), we have
		\[
		(u_{t},u_{t}) + a(u,u_t) =(f(u),u_{t})+(g(u)\xi(t),u_t),
		\]
		which implies
		\[
		\|u_t\|^2_{H} + \frac{1}{2} \frac{d}{dt} \|u\|_a^2 = (f(u),u_t)+(g(u)\xi(t),u_t).
		\]
		Integrating over the interval $[0,t],\, 0\leq t \leq T$ and taking the expectation of both sides, we get
		\begin{align*}
		&\boldsymbol{E}\int_0^t \|u_t(x,s)\|^2_{H} ds+ \frac{1}{2} \boldsymbol{E}\|u(x,t)\|_a^2 \\
		&= \boldsymbol{E}\int_0^t (f(u),u_t)ds +\boldsymbol{E}\int_0^t (g(u)\xi(s),u_t)ds+ \frac{1}{2} \boldsymbol{E}\|u_0\|_a^2\\
		&=\boldsymbol{E}\int_0^t (f(u),u_t)ds +\boldsymbol{E}\int_0^t (g(u)dW(s),u_t)+ \frac{1}{2} \boldsymbol{E}\|u_0\|_a^2\\
		&\leq \boldsymbol{E}\int_0^t \frac{1}{2}\left(\|f(u)\|^2_{H}+\|u_t\|^2_{H}\right)ds +\frac{1}{2} \boldsymbol{E}\|u_0\|_a^2 ,
		\end{align*}
		where  $\boldsymbol{E}\int_0^t (g(u(s))dW(s),u_t)=0$ has been  used in the third step by Prop.\ref{ito integral}. This completes the proof.	
\end{proof}
	
\end{lemma}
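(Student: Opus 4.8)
\textbf{Proof proposal for Lemma \ref{lem14}.}

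The plan is to test the weak equation (\ref{eq-weak}) with $v = u_t$, which is the natural choice for obtaining a bound on $\boldsymbol{E}\|u_t\|^2_{L^2(D\times[0,T])}$ and at the same time extracting the energy-norm term $\|u\|_a^2$. Plugging in $v=u_t$ gives $(u_t,u_t) + a(u,u_t) = (f(u),u_t) + (g(u)\xi(t),u_t)$, and since $a(u,u_t) = \tfrac12 \tfrac{d}{dt}\|u\|_a^2$ we arrive at $\|u_t\|_H^2 + \tfrac12 \tfrac{d}{dt}\|u\|_a^2 = (f(u),u_t) + (g(u)\xi(t),u_t)$. The next step is to integrate in time over $[0,t]$ with $0\le t\le T$ and take expectations of both sides, so that the term $\tfrac12\tfrac{d}{dt}\|u\|_a^2$ telescopes into $\tfrac12\boldsymbol{E}\|u(\cdot,t)\|_a^2 - \tfrac12\boldsymbol{E}\|u_0\|_a^2$.

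Then I would dispatch the two right-hand side terms. The stochastic term is handled by rewriting $\boldsymbol{E}\int_0^t (g(u(s))\xi(s),u_t)\,ds$ as the It\^o integral $\boldsymbol{E}\int_0^t (g(u(s))dW(s),u_t)$ and invoking Proposition \ref{ito integral} (the martingale property with mean zero) to conclude it vanishes. The deterministic nonlinear term is controlled by Cauchy--Schwarz and Young's inequality: $(f(u),u_t) \le \tfrac12\|f(u)\|_H^2 + \tfrac12\|u_t\|_H^2$, which lets the $\tfrac12\|u_t\|_H^2$ piece be absorbed into the left-hand side $\boldsymbol{E}\int_0^t\|u_t\|_H^2\,ds$. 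After discarding the nonnegative term $\tfrac12\boldsymbol{E}\|u(\cdot,t)\|_a^2$, one is left with $\tfrac12\boldsymbol{E}\int_0^t\|u_t\|_H^2\,ds \le \tfrac12\boldsymbol{E}\int_0^t\|f(u)\|_H^2\,ds + \tfrac12\boldsymbol{E}\|u_0\|_a^2$; taking $t=T$ and recalling $\|f(u)\|^2_{\mathcal H} = \boldsymbol{E}\int_0^T\|f(u(s))\|_H^2\,ds$ yields the claimed estimate with a suitable constant $C$.

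The one genuine subtlety — and the step I expect to require the most care — is the justification that the stochastic term truly has zero expectation when paired against $u_t$. Proposition \ref{ito integral} is stated for the It\^o integral $\int_0^t \phi\,dW$ against a fixed test function, whereas here the integrand is effectively being tested against $u_t$, which is itself only an $H$-valued (or $H^{-1}$-valued) stochastic process and not obviously regular in time. A fully rigorous argument would either proceed via an It\^o-formula / energy-identity derivation at the level of the mild solution (or a Galerkin approximation), where the cross term $\int_0^t (g(u)dW, u_t)$ is replaced by the correct It\^o correction and the martingale part manifestly integrates to zero, or it would appeal to a regularization-in-time argument. Since the paper's stated regularity is only $\sup_t \boldsymbol{E}\|u(t)\|_H^2 < \infty$ together with A.3, making the manipulation $a(u,u_t)=\tfrac12\tfrac{d}{dt}\|u\|_a^2$ and the term $\|u_t\|_H^2$ meaningful in fact presupposes the extra regularity that the lemma is establishing; I would therefore present the computation formally (as the excerpt does) while noting that it is rigorously justified by carrying it out on a Galerkin/spectral truncation and passing to the limit, with the It\^o isometry (the second Proposition) controlling the noise contribution uniformly.
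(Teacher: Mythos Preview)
Your proposal is correct and follows essentially the same argument as the paper: test (\ref{eq-weak}) with $v=u_t$, rewrite $a(u,u_t)=\tfrac12\tfrac{d}{dt}\|u\|_a^2$, integrate in time, take expectations, kill the stochastic term via Proposition~\ref{ito integral}, and absorb $\tfrac12\|u_t\|_H^2$ using Young's inequality. Your additional remarks about the formal nature of the vanishing stochastic term and the implicit regularity assumption are apt observations, but the paper's proof proceeds at exactly the same formal level without further justification.
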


\begin{remark}
	Since $f(u)$ is of linear growth and a Nemytskii operator in $H$ and the weak solution of (\ref{ex-eq2}) $u \in L^{\infty}([0,T],L^2(D\times \Omega))$, i.e., $\left\|f(u)\right\|_{H}\leq C_f(1+\left\|u\right\|_{H}), \forall u\in H$ and $\sup_{t\in[0,T]}\boldsymbol{E}\left\|u(t)\right\|_{H}^2 <+\infty$, then further we have the following estimation by Lemma \ref{lem14},
	\begin{align*}
		\boldsymbol{E}\|u_{t}\|^{2}_{L^2(D\times [0,T])}&\leq C\bigg(\boldsymbol{E}\|u_{0}\|_a^{2}+\|f(u)\|^{2}_{\mathcal{H}} \bigg)\\
	&\leq C\bigg(\boldsymbol{E}\|u_{0}\|_a^{2}+2C_f(1+\|u\|^2_{\mathcal{H}}) \bigg)\\
	&\leq C\bigg(\boldsymbol{E}\|u_{0}\|_a^{2}+2C_f\left(1+\|u\|^2_{L^{\infty}\left([0,T],L^2(D\times \Omega)\right)}\right) \bigg)\\
	&< +\infty.
	\end{align*}	
\end{remark}

Let $\widehat{u}\in V_{ms}$ be the elliptic projection of the function $u\in V$, i.e., $\widehat{u}$ satisfies
\[
a(\widehat{u},v)=a(u,v),  \, \forall v\in V_{ms}.
\]
	From  Lemma \ref{lem14}, we immediately have the following estimates of $\widehat{u}$ and $u_{ms}$, respectively,
	\begin{equation}\label{ut^}
	\begin{split}
	\boldsymbol{E}\|\widehat{u}_{t}\|^{2}_{L^2(D\times [0,T])} &\: \leq C\bigg(\boldsymbol{E}\|\widehat{u}_{0}\|_a^{2}+\|f(u)\|^{2}_{\mathcal{H}} \bigg),\\
	\boldsymbol{E}\|(u_{ms})_{t}\|^{2}_{L^2(D\times [0,T])} &\: \leq C\bigg(\boldsymbol{E}\|u_{0,ms}\|_a^{2}+\|f(u)\|^{2}_{\mathcal{H}} \bigg).
	\end{split}
	\end{equation}
By Lemma $4.3$ in \cite{Mengnan2019}, we have the following estimate between the solution of stochastic parabolic equation and its corresponding elliptic projection.
\begin{lemma}\label{lem1}
	Let $u$ be the solution of (\ref{ex-eq1}). For each $t>0$, any fixed realization $\xi(t)$,  the elliptic projection $\widehat{u}(t)\in V_{ms}$ and satisfies
	\begin{equation}\label{ms-ell}
	a\big(u(t)-\widehat{u}(t),v\big) =0, \quad \forall v\in V_{ms}.
	\end{equation}
	Then, for any $t>0$,
	\begin{eqnarray*}
	&\| (u-\widehat{u})(t)\|_{a} \leq Ch\Lambda^{-\frac{1}{2}} \|\kappa^{-\frac{1}{2}}(f(u)+g(u)\xi(t)-u_{t})(t)\|_{H},\\
   &\|(u-\widehat{u})(t)\|_{H}\leq Ch^{2}\Lambda^{-1} \kappa_0^{-\frac{1}{2}} \|\kappa^{-\frac{1}{2}}(f(u)+g(u)\xi(t)-u_{t})(t)\|_{H},
	\end{eqnarray*}
	where $C$ is a constant independent of $\kappa$ and the mesh size $h${\color{black}, and $\Lambda=\mathop{\min}\limits_{1\leq i\leq N}\lambda_{L_i+1}^{(i)}$  is the smallest eigenvalue that we discard when constructing the local auxiliary multiscale space.}
\end{lemma}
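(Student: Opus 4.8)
The plan is to freeze the time $t$ and the noise realization $\xi(t)$ and to observe that $u(t)$ then solves a deterministic second-order elliptic problem, so that the claim reduces to the standard CEM-GMsFEM convergence estimate applied to a particular right-hand side. Concretely, moving $u_t$ to the right in the strong form (\ref{ex-eq2}), the solution $u(t)$ satisfies, in the weak sense over $H_0^1$,
\[
a\big(u(t),v\big)=\big(\tilde f(t),v\big)\quad\forall v\in H_0^1,\qquad \tilde f(t):=f(u)+g(u)\xi(t)-u_t,
\]
i.e. $u(t)$ is the weak solution of $\mathcal{A}u(t)=\tilde f(t)$ with homogeneous Dirichlet data. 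By definition $\widehat u(t)\in V_{ms}$ is the $a(\cdot,\cdot)$-orthogonal projection of $u(t)$, which immediately yields the Galerkin orthogonality $a\big((u-\widehat u)(t),v\big)=0$ for all $v\in V_{ms}$, that is (\ref{ms-ell}).

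For the energy-norm bound I would reproduce (or, since it is exactly Lemma~4.3 of \cite{Mengnan2019}, simply cite) the CEM-GMsFEM analysis of \cite{Eric2017CEM}. The two ingredients are: (i) the spectral estimate $s(w,w)\le\Lambda^{-1}a(w,w)$ for every $w$ that is $s$-orthogonal to $V_{aux}$, which follows from the local eigenproblems with $\Lambda=\min_i\lambda_{L_i+1}^{(i)}$, combined with the first-order approximation property of the partition of unity and of the auxiliary space $V_{aux}$; together these give $\|u(t)-\widehat u^{glo}(t)\|_a\le Ch\,\Lambda^{-1/2}\|\kappa^{-1/2}\tilde f(t)\|_H$, where $\widehat u^{glo}(t)$ is the projection of $u(t)$ onto the \emph{global} constrained-energy-minimizing space; and (ii) the exponential decay of the global basis functions away from their coarse block, which lets one replace $\widehat u^{glo}(t)$ by the oversampled $\widehat u(t)\in V_{ms}$ at the cost of an error of size $O(e^{-cm})$ in the number $m$ of oversampling layers, absorbed into the $O(h)$ term once $m\gtrsim\log(1/h)$. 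The only place where the stochastic nature of the problem intervenes is in checking $\tilde f(t)\in H$, which is precisely what A.1--A.3 and Lemma~\ref{lem14} (together with $\sup_{[0,T]}\boldsymbol{E}\|u\|_H^2<\infty$) provide.

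For the $L^2$ bound I would run an Aubin--Nitsche duality argument. Let $z\in H_0^1$ solve $a(z,v)=\big((u-\widehat u)(t),v\big)$ for all $v\in H_0^1$ and let $\widehat z\in V_{ms}$ be its elliptic projection. Testing with $(u-\widehat u)(t)$ and using Galerkin orthogonality twice,
\[
\|(u-\widehat u)(t)\|_H^2=a\big(z,(u-\widehat u)(t)\big)=a\big(z-\widehat z,(u-\widehat u)(t)\big)\le\|z-\widehat z\|_a\,\|(u-\widehat u)(t)\|_a .
\]
Applying the energy estimate to $z$, whose elliptic source is $(u-\widehat u)(t)$, gives $\|z-\widehat z\|_a\le Ch\Lambda^{-1/2}\|\kappa^{-1/2}(u-\widehat u)(t)\|_H\le Ch\Lambda^{-1/2}\kappa_0^{-1/2}\|(u-\widehat u)(t)\|_H$; substituting the energy estimate for $u-\widehat u$ and cancelling one factor of $\|(u-\widehat u)(t)\|_H$ yields the stated $h^2\Lambda^{-1}\kappa_0^{-1/2}$ bound.

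The main obstacle is ingredient (ii): the localization/decay estimate showing that the globally supported constrained minimizers can be replaced by the oversampled multiscale basis functions without degrading the $O(h\Lambda^{-1/2})$ rate. This is the technically heaviest part of the CEM-GMsFEM machinery; once it (and the spectral estimate) are in hand, the rest is a routine elliptic Galerkin-plus-duality computation carried out pathwise in $\omega$ and pointwise in $t$, so that no further probabilistic tools beyond the integrability furnished by Lemma~\ref{lem14} are needed.
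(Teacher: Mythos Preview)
Your proposal is correct and matches the paper's approach exactly: the paper does not prove this lemma in-line but simply observes that, once $t$ and the realization $\xi(t)$ are frozen, $u(t)$ solves the elliptic problem $a(u(t),v)=(\tilde f(t),v)$ with $\tilde f(t)=f(u)+g(u)\xi(t)-u_t$, and then invokes Lemma~4.3 of \cite{Mengnan2019} verbatim. Your outline of that cited result (spectral estimate via $\Lambda$, localization/decay of the global CEM basis, and Aubin--Nitsche duality for the $L^2$ bound) is accurate, so there is nothing to add.
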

\begin{remark}\label{rem}
	By Lemma \ref{lem1} and Lemma \ref{lem14}, assuming $g(u)\xi(t)\in \mathcal{H}$, we further have the following  estimations,
	\begin{equation}\label{ms-a-err}
	\begin{split}
	\boldsymbol{E}\int_0^t\| (u-\widehat{u})(s)\|^2_{a}ds  \leq Ch^2\Lambda^{-1}\kappa_0^{-1 }\left(\|f(u)\|^2_{\mathcal{H}} +\|g(u)\xi(t)\|^2_{\mathcal{H}}+\boldsymbol{E}\|u_0\|_a^2\right),
	\end{split}	
	\end{equation}
	\begin{equation}\label{ms-l2-err}
	\boldsymbol{E}\int_0^t\| (u-\widehat{u})(s)\|^2_{H}ds\leq Ch^4\Lambda^{-2}\kappa_0^{-2 }\left(\|f(u)\|^2_{\mathcal{H}} +\|g(u)\xi(t)\|^2_{\mathcal{H}}+\boldsymbol{E}\|u_0\|_a^2\right).
	\end{equation}
\end{remark}

In the following theorem, we  prove a strong convergence of the semidiscrete solution of  (\ref{eq:scheme}).
\begin{theorem}\label{thm2}
	Let $u$ and $u_{ms}$ be the solution of (\ref{eq-weak}) and (\ref{eq:scheme}),  respectively. Then for any fixed $t \in [0,T]$,
	\begin{equation}
	\begin{split}
	&\:  \boldsymbol{E}\| (u - u_{ms})(t) \|_{H}^{2} + \boldsymbol{E}\int_{0}^{t}\| u - u_{ms} \|_a^{2}ds\\
	&\:\leq Ch^{2}\Lambda^{-1} \kappa_0^{-1} \left(\|f(u)\|^2_{\mathcal{H}} +\|g(u)\xi(x,t)\|^2_{\mathcal{H}}+\boldsymbol{E}\|u_0\|_a^2+\boldsymbol{E}\|u_{0,ms}\|_a^2\right)\\
	\quad  &\: + \boldsymbol{E}\|(u-u_{ms})(0)\|_{H}^2,
	\end{split}
	\end{equation}
	where $C$ is a constant independent of $\kappa$ and the mesh size $h$.
\end{theorem}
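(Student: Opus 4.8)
I would follow the classical elliptic (Ritz) projection splitting used for parabolic problems, adapted to the present stochastic equation. Write $u-u_{ms}=\rho+\theta$ with $\rho:=u-\widehat u$ and $\theta:=\widehat u-u_{ms}\in V_{ms}$, where $\widehat u$ is the elliptic projection introduced above. The part $\rho$ carries only the spatial discretization error and is already under control: Lemma \ref{lem1} together with Remark \ref{rem} (the inequalities (\ref{ms-a-err}) and (\ref{ms-l2-err})) bounds $\boldsymbol E\int_0^t\|\rho\|_a^2\,ds$ by $Ch^2\Lambda^{-1}\kappa_0^{-1}\big(\|f(u)\|_{\mathcal H}^2+\|g(u)\xi\|_{\mathcal H}^2+\boldsymbol E\|u_0\|_a^2\big)$ and $\boldsymbol E\int_0^t\|\rho\|_H^2\,ds$ by the same expression with a higher power of $h$. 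These already have the shape of the right-hand side of the theorem, so it remains to estimate $\theta$.

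For $\theta$, I subtract the CEM-GMsFEM scheme (\ref{eq:scheme}) from the weak form (\ref{eq-weak}) tested against $v\in V_{ms}\subset H_0^1$; using $a(\rho,v)=0$ this yields the error equation $(\theta_t,v)+a(\theta,v)=-(\rho_t,v)+(f(u)-f(u_{ms}),v)+\big((g(u)-g(u_{ms}))\xi(t),v\big)$ for all $v\in V_{ms}$. Taking $v=\theta$, using $(\theta_t,\theta)=\tfrac12\tfrac{d}{dt}\|\theta\|_H^2$, integrating over $[0,t]$ and taking expectations gives an identity for $\boldsymbol E\|\theta(t)\|_H^2+2\boldsymbol E\int_0^t\|\theta\|_a^2\,ds$. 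The stochastic term is treated exactly as in the proof of Lemma \ref{lem14}: writing $\big((g(u)-g(u_{ms}))\xi(s),\theta\big)ds=\big((g(u)-g(u_{ms}))dW(s),\theta\big)$ and applying Proposition \ref{ito integral}, its expectation vanishes since $\theta(s)$ is adapted. For the reaction term, Assumption A.1 gives $\big(f(u)-f(u_{ms}),\theta\big)\le L_f\|u-u_{ms}\|_H\|\theta\|_H\le L_f(\|\rho\|_H+\|\theta\|_H)\|\theta\|_H$, and Young's inequality isolates a multiple of $\|\theta\|_H^2$ together with $\|\rho\|_H^2$, which is already bounded.

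The one genuinely delicate term is $(\rho_t,\theta)$. Since the elliptic projection is a linear, time-independent map it commutes with $\partial_t$, so $\rho_t=u_t-\widehat{u_t}$ is again an elliptic-projection error; after Young's inequality $|(\rho_t,\theta)|\le\tfrac12\|\rho_t\|_H^2+\tfrac12\|\theta\|_H^2$ it remains to control $\boldsymbol E\int_0^t\|\rho_t\|_H^2\,ds$, which I would do through $\boldsymbol E\int_0^t\|\rho_t\|_H^2\le 2\boldsymbol E\int_0^t\big(\|u_t\|_H^2+\|\widehat u_t\|_H^2\big)$ and the regularity bounds (\ref{ut}) and (\ref{ut^}); here the term $\boldsymbol E\|u_{0,ms}\|_a^2$ of the theorem enters through the companion bound for $(u_{ms})_t$, and $\boldsymbol E\|\widehat u_0\|_a^2\le\boldsymbol E\|u_0\|_a^2$ by $a$-stability of the projection. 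Assembling the contributions yields $\boldsymbol E\|\theta(t)\|_H^2+\boldsymbol E\int_0^t\|\theta\|_a^2\,ds\le C\int_0^t\boldsymbol E\|\theta\|_H^2\,ds+(\text{data terms of the stated order})+\boldsymbol E\|\theta(0)\|_H^2$, and Gronwall's inequality (with constant absorbing $L_f$) closes it. Finally I recombine via $\boldsymbol E\|(u-u_{ms})(t)\|_H^2\le 2\boldsymbol E\|\rho(t)\|_H^2+2\boldsymbol E\|\theta(t)\|_H^2$, the analogous split in the energy norm, and $\boldsymbol E\|\theta(0)\|_H^2\le 2\boldsymbol E\|\rho(0)\|_H^2+2\boldsymbol E\|(u-u_{ms})(0)\|_H^2$.

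The step I expect to be hardest is precisely the $(\rho_t,\theta)$ term: keeping its contribution at the claimed order $h^2\Lambda^{-1}\kappa_0^{-1}(\cdots)$ rather than a crude $O(1)$ bound forces one to exploit the commutation of the elliptic projection with $\partial_t$ and the a priori estimates of Lemma \ref{lem14}, and it is where $\boldsymbol E\|u_{0,ms}\|_a^2$ shows up. A secondary but necessary point is careful bookkeeping of the Gronwall constant's dependence on $L_f$ (and on $L_g$, if one keeps the It\^o correction for $\|\theta\|_H^2$ instead of treating $\xi$ formally as Lemma \ref{lem14} does), together with checking adaptedness so the stochastic integral genuinely has zero mean.
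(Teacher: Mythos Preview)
Your overall plan---split $u-u_{ms}=\rho+\theta$ with $\rho=u-\widehat u$, derive an equation for $\theta$, test with $\theta$, kill the stochastic integral via Proposition~\ref{ito integral}, and close with Gronwall---is the classical Thom\'ee route and is in principle workable. But there is a genuine gap in your treatment of $(\rho_t,\theta)$, and it is exactly the step you flag as hardest. The estimate you actually propose, $|(\rho_t,\theta)|\le\tfrac12\|\rho_t\|_H^2+\tfrac12\|\theta\|_H^2$ followed by $\boldsymbol E\int_0^t\|\rho_t\|_H^2\le 2\boldsymbol E\int_0^t(\|u_t\|_H^2+\|\widehat u_t\|_H^2)$, \emph{is} the crude $O(1)$ bound you warn against: Lemma~\ref{lem14} and (\ref{ut^}) control $\|u_t\|$ and $\|\widehat u_t\|$ by data terms carrying no factor of $h$, so after Gronwall you obtain $\boldsymbol E\|\theta(t)\|_H^2\le C(\text{data})$ rather than $Ch^2\Lambda^{-1}\kappa_0^{-1}(\text{data})$. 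Commutation $\rho_t=u_t-\widehat{u_t}$ does not rescue this on its own---to turn $\rho_t$ into a projection error of the right size via Lemma~\ref{lem1} you would need $u_{tt}\in\mathcal H$, which is not assumed. (A minor related slip: in your split the relevant time derivative is $\widehat u_t$, not $(u_{ms})_t$, so $\boldsymbol E\|u_{0,ms}\|_a^2$ does not enter where you claim.)

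The paper avoids this by \emph{not} isolating a $\theta$-equation. It tests the full error equation (\ref{diff}) with $v=\widehat u-u_{ms}$ and then rewrites bilinearly so that the time derivative sits against the \emph{small} factor: the resulting term is $\big((u-u_{ms})_t,\,u-\widehat u\big)$ rather than $(\rho_t,\theta)$. One then bounds the derivative crudely, $\|(u-u_{ms})_t\|_H\le\|u_t\|_H+\|(u_{ms})_t\|_H$, via Lemma~\ref{lem14} and (\ref{ut^})---this is where $\boldsymbol E\|u_{0,ms}\|_a^2$ genuinely enters---while $\|u-\widehat u\|_H$ supplies the factor $h^2\Lambda^{-1}\kappa_0^{-1}$ through (\ref{ms-l2-err}); H\"older in $t$ gives the product the stated order. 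Your splitting can be salvaged to the same effect by integrating $(\rho_t,\theta)$ by parts in time, $\int_0^t(\rho_t,\theta)\,ds=(\rho,\theta)\big|_0^t-\int_0^t(\rho,\theta_t)\,ds$, and bounding $\theta_t=\widehat u_t-(u_{ms})_t$ crudely while $\rho$ carries the $h$-power; but that is not the argument you wrote.
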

\begin{proof}
	By  (\ref{eq-weak}) and (\ref{eq:scheme}), we have
	\begin{align}
	\label{weaku}(u_t,v) + a(u,v) & = \big(f(u),v\big)+\big(g(u)\xi(t),v\big) ,
	\\
	\label{weakums}\big((u_{ms})_t,v\big) + a(u_{ms},v) & = \big(f(u_{ms}),v\big)+\big(g(u_{ms})\xi(t),v\big)
	\end{align}
	for all $v\in V_{ms}$. Thus, by subtracting (\ref{weakums}) from (\ref{weaku}), we get, for $\forall v\,\in V_{ms}$
	\begin{equation}\label{diff}
	\big((u-u_{ms})_t,v\big) + a(u-u_{ms},v)  = \big(f(u)-f(u_{ms}),v\big)+\big(g(u)\xi-g(u_{ms})\xi,v\big).
	\end{equation}
	The elliptic projection of $u$, $\widehat{u}\in V_{ms}$, satisfies (\ref{ms-a-err}) and (\ref{ms-l2-err}).  In particular, by taking $v=\widehat{u}-u_{ms}\in V_{ms} $ in (\ref{diff}),
	we have the following error equation,
	\begin{equation}\label{adiff}
	\begin{split}
	&\big((u-u_{ms})_t,\widehat{u}-u_{ms}\big) +\: a(u-u_{ms},\widehat{u}-u_{ms})\\
 =&\:\big(f(u)-f(u_{ms}),\widehat{u}-u_{ms}\big)+\Big(\big(g(u)-g(u_{ms})\big)\xi,\widehat{u}-u_{ms}\Big) .	
	\end{split}
	\end{equation}
	Due to the bilinearity of $a(\cdot,\cdot)$, we can rewrite (\ref{adiff}) as
	\[
	\begin{split}
	&\: \big((u-u_{ms})_t,u-u_{ms}\big) + a(u-u_{ms},u-u_{ms})\\
	&\:+ \big(f(u_{ms})-f(u),u-u_{ms}\big)+\Big(\big(g(u_{ms})-g(u)\big)\xi,u-u_{ms}\Big)\\=
	&\:\big((u-u_{ms})_t,u-\widehat{u}\big) + a(u-u_{ms},u-\widehat{u})\\
	&\:+ \big(f(u_{ms})-f(u),u-\widehat{u}\big)+\Big(\big(g(u_{ms})-g(u)\big)\xi,u-\widehat{u}\Big).
	\end{split}
	\]
	The Lipschitz assumptions of $f$, Cauchy-Schwarz inequality and Young's inequality give
	\[
	\begin{split}
	&\:\frac{1}{2} \frac{d}{dt} \| u - u_{ms} \|^2 + \| u - u_{ms}\|_{a}^2 -L_f\|u-u_{ms}\|^2+\Big(\big(g(u_{ms})-g(u)\big)\xi,u-u_{ms}\Big)
	\\\leq &\:\big((u-u_{ms})_t,u-\widehat{u}\big) + a(u-u_{ms},u-\widehat{u})+\frac{1}{2}L_f\|u-u_{ms}\|^2+\frac{1}{2}\|u-\widehat{u}\|^2\\&\:+\Big(\big(g(u_{ms})-g(u)\big)\xi,u-\widehat{u}\Big).
	\end{split}
	\]
	After taking  integration over $[0,t]$ for $\forall t, \, 0\leq t\leq T$, and taking  expectation with respect to random process, and using  the Cauchy-Schwarz and Young's inequalities to the right-hand  side, it  follows that
	\begin{equation}\label{IE}
	\begin{split}
	&\boldsymbol{E}\| (u - u_{ms}) (t)\|_{H}^2 + \boldsymbol{E}\int_{0}^{t}\| u - u_{ms} \|_{a}^2ds \\
&\leq  2\boldsymbol{E}\int_0^t(\|u_t\|_{H}+\|(u_{ms})_t\|_{H})\| u - \widehat{u}\|_{H} ds
	 + \boldsymbol{E}\int_0^t\| u-\widehat{u}\|_{a}^2ds+\boldsymbol{E}\int_0^t\|u-\widehat{u}\|_{H}^2ds\\
	&+3L_f\boldsymbol{E}\int_0^t\|u-u_{ms}\|_{H}^2ds
	+\boldsymbol{E}\|(u-u_{ms})(0)\|_{H}^2,
	\end{split}
	\end{equation}
	where we used the following result by Prop.\ref{ito integral},
\begin{eqnarray*}
 &\boldsymbol{E}\int_0^t\Big(\big(g(u_{ms})-g(u)\big)\xi(s),u-u_{ms}\Big)ds=\boldsymbol{E}\int_0^t\Big(\big(g(u_{ms})-g(u)\big)dW(s),u-u_{ms}\Big) ,\\ &\boldsymbol{E}\int_0^t\Big(\big(g(u_{ms})-g(u)\big)\xi(s),u-\widehat{u}\Big)ds=\boldsymbol{E}\int_0^t\Big(\big(g(u_{ms})-g(u)\big)dW(s),u-\widehat{u}\Big).
 \end{eqnarray*}
For  the first term on the right-hand  side of (\ref{IE}),  H$\ddot{o}$lder inequality implies
	\begin{equation}\label{holder}
	\begin{split}
	&\boldsymbol{E}\int_0^t\big(\|u_t\|_{H} \:+\|(u_{ms})_t\|_{H}\big)\| u - \widehat{u}\|_{H} ds   \\
	&\: \leq \boldsymbol{E}\left\{\left(\int_0^t\big(\|u_t\|_{H}+\|(u_{ms})_t\|_{H}\big)^2 ds\right)^{\frac{1}{2}}\left(\int_{0}^t\| u - \widehat{u}\|_{H}^2 ds\right)^{\frac{1}{2}}\right\}.
	\end{split}
	\end{equation}
	For presentation convenience,  we define
	\begin{equation}\label{denote}
	\begin{split}
	I := &\:2\boldsymbol{E}\left\{\left(\int_0^t\big(\|u_t\|_{H}+\|(u_{ms})_t\|_{H}\big)^2 ds\right)^{\frac{1}{2}}\left(\int_{0}^t\| u - \widehat{u}\|_{H}^2 ds\right)^{\frac{1}{2}}\right\} \\
	&\:+ \boldsymbol{E}\int_0^t\| u-\widehat{u}\|_{a}^2ds+\boldsymbol{E}\int_0^t\|u-\widehat{u}\|_{H}^2ds +\boldsymbol{E}\|(u-u_{ms})(0)\|_{H}^2 .
	\end{split}
	\end{equation}
	Hence, from (\ref{IE}), (\ref{holder}) and (\ref{denote}), we get
	\[
	\begin{split}
	\boldsymbol{E}\| (u - u_{ms}) (t)\|_{H}^2 +&\: \boldsymbol{E}\int_{0}^{t}\| u - u_{ms} \|_{a}^2ds \leq I +3L_f\boldsymbol{E}\int_0^t\|u-u_{ms}\|_{H}^2ds\\
	&\: \leq I+3L_f\boldsymbol{E}\int_0^t\left(\|u-u_{ms}\|_{H}^2+\int_{0}^{t}\| u - u_{ms} \|_{a}^2d\tau\right)ds.
	\end{split}
	\]
	The Gronwall inequality concludes that
	\begin{equation}\label{gronwall}
	\boldsymbol{E}\| (u - u_{ms}) (t)\|_{H}^2  + \boldsymbol{E}\int_{0}^{t}\| u - u_{ms} \|_{a}^2ds \leq \mathcal{C} I.
	\end{equation}
	Finally, combining Lemma \ref{lem14}, Corollary \ref{ut^}, Lemma \ref{lem1}, Remark \ref{rem} and (\ref{gronwall}), we complete the proof.
\end{proof}

In particular,  for stochastic parabolic PDEs with additive noise, which is the degeneration of (\ref{ex-eq1}), i.e.,
\begin{equation}\label{additive}
\left\{
\begin{aligned}
du - \nabla \cdot \big(\kappa(x) \nabla u\big)dt&=f(u)dt+g(x,t)dW(t) \quad in\quad D \times (0,T] ,\\
u(x,t)&=0 \quad on\quad \partial D\times (0,T],\\
u(x,0)&=u_{0}(x) \quad in\quad D.
\end{aligned}
\right.
\end{equation}
We similarly have the variational form of (\ref{additive})
\begin{equation}\label{add-weak}
\left\{
\begin{aligned}
(u_t,v) + a(u,v)& = \big(f(u),v\big)+\big(g(x,t)\xi(t),v\big) \quad \forall v\in H_0^1, \, t> 0,\\
\big(u(\cdot,0), v\big)&=(u_{0},v) \quad \forall v\in H_0^1,
\end{aligned}
\right.
\end{equation}
Then CEM-GMsFEM solution $u(\cdot,t)\in V_{ms}$ of (\ref{add-weak}) solves
\begin{equation}\label{add:scheme}
\left\{
\begin{aligned}
\big((u_{ms})_t,v\big) + a(u_{ms},v)& = \big(f(u_{ms}),v\big)+\big(g(x,t)\xi(t),v\big) \quad \forall v\in V_{ms},\, t> 0,\\
\big(u_{ms}(\cdot,0), v\big)&=(u_{0,ms},v) \quad \forall v\in V_{ms}
\end{aligned}
\right.
\end{equation}

We have the following error estimate of CEM-GMsFEM  for   the SPDE with additive noise, which generalizes the result in \cite{zhang21}.
\begin{theorem}\label{thm3}
	Let $u$ and $u_{ms}$ be the solution of (\ref{add-weak}) and (\ref{add:scheme}),  respectively. Then for any fixed $t \in [0,T]$,
	\begin{equation}
	\begin{split}
	&\:\boldsymbol{E}\| (u - u_{ms})(t) \|_{H}^{2} + \boldsymbol{E}\int_{0}^{t}\| u - u_{ms} \|_a^{2}ds\\
	&\:\leq Ch^{2}\Lambda^{-1} \kappa_0^{-1} \left(\|f(u)\|^2_{\mathcal{H}} +\boldsymbol{E}\|u_0\|_a^2+\boldsymbol{E}\|u_{0,ms}\|_a^2\right) + \boldsymbol{E}\|(u-u_{ms})(0)\|_{H}^2,
	\end{split}
	\end{equation}
	where $C$ is a constant independent of $\kappa$ and the mesh size $h$.
\end{theorem}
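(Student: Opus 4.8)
The plan is to mirror the proof of Theorem~\ref{thm2}, taking advantage of the fact that in the additive case the stochastic forcing $\big(g(x,t)\xi(t),v\big)$ appears identically in the continuous problem (\ref{add-weak}) and in the CEM-GMsFEM problem (\ref{add:scheme}). Restricting (\ref{add-weak}) to $v\in V_{ms}$ and subtracting (\ref{add:scheme}), this term cancels exactly, so the error equation reads
\[
\big((u-u_{ms})_t,v\big)+a(u-u_{ms},v)=\big(f(u)-f(u_{ms}),v\big),\qquad\forall v\in V_{ms},
\]
which is the analogue of (\ref{diff}) \emph{without} the noise contribution. This is the only structural change from Theorem~\ref{thm2}, and it removes any need to invoke the It\^{o} isometry or the martingale property of Proposition~\ref{ito integral} when treating the error equation itself.

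Before the energy argument I would record the additive-noise versions of the auxiliary estimates. Lemma~\ref{lem14} carries over: testing (\ref{add-weak}) with $u_t$ gives $\|u_t\|_H^2+\tfrac12\frac{d}{dt}\|u\|_a^2=(f(u),u_t)+(g(x,t)\xi(t),u_t)$, and after integrating in time, taking expectation, and using Proposition~\ref{ito integral} to eliminate $\boldsymbol{E}\int_0^t\big(g(x,s)\,dW(s),u_t\big)$, one obtains $\boldsymbol{E}\|u_t\|^2_{L^2(D\times[0,T])}\le C\big(\boldsymbol{E}\|u_0\|_a^2+\|f(u)\|^2_{\mathcal H}\big)$ together with the corresponding bounds (\ref{ut^}) for $\widehat u_t$ and $(u_{ms})_t$. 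The elliptic-projection estimate of Lemma~\ref{lem1} applies with the residual $\mathcal{A}u=f(u)+g(x,t)\xi(t)-u_t$, so that (\ref{ms-a-err})--(\ref{ms-l2-err}) hold in the additive setting with $\|g(u)\xi(t)\|^2_{\mathcal H}$ replaced by $\|g(x,t)\xi(t)\|^2_{\mathcal H}$; this is the only place where the additive forcing enters the bound.

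With these in hand the remaining steps copy Theorem~\ref{thm2}: split $u-u_{ms}=(u-\widehat u)+(\widehat u-u_{ms})$ with $\widehat u-u_{ms}\in V_{ms}$, take $v=\widehat u-u_{ms}$ in the error equation, use the $a$-orthogonality (\ref{ms-ell}) to discard $a(u-\widehat u,\widehat u-u_{ms})$, and apply the Lipschitz bound for $f$, Cauchy--Schwarz and Young's inequalities to reach $\tfrac12\frac{d}{dt}\|u-u_{ms}\|_H^2+\|u-u_{ms}\|_a^2\le\big((u-u_{ms})_t,u-\widehat u\big)+a(u-u_{ms},u-\widehat u)+CL_f\|u-u_{ms}\|_H^2+C\|u-\widehat u\|_H^2$. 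Integrating over $[0,t]$ and taking expectation now introduces no stochastic integral — the only randomness left sits in $u$ itself and in the initial datum — so one bounds the cross term $\boldsymbol{E}\int_0^t\big(\|u_t\|_H+\|(u_{ms})_t\|_H\big)\|u-\widehat u\|_H\,ds$ by H\"older's inequality as in (\ref{holder}), applies Gronwall's inequality as in (\ref{gronwall}), and substitutes the estimates above to collect the right-hand side in the form (\ref{denote}). The genuinely delicate point — exactly as in Theorem~\ref{thm2} — is the justification of the energy identity for $u_t$ and of the vanishing of its stochastic term, which presupposes enough regularity of the mild solution; beyond that, the argument is a routine specialization of the multiplicative case, simplified by the cancellation of the noise in the error equation.
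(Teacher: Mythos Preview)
Your approach is correct and is exactly the paper's: the paper's proof consists of deriving the noise-free error equation (\ref{add-diff}) by subtraction and then saying ``By following the proof of Theorem~\ref{thm2}, the proof is done.'' Your more detailed write-up simply unpacks that reference; note, however, that your careful tracking of the elliptic-projection residual correctly produces a contribution $\|g(x,t)\xi(t)\|^2_{\mathcal H}$ via Remark~\ref{rem}, which the paper's stated bound silently drops---this is an imprecision in the paper's statement, not a flaw in your argument.
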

\begin{proof}
	From  (\ref{add-weak}) and (\ref{add:scheme}), for $\forall v\,\in V_{ms}$, we have
	\begin{equation}\label{add-diff}
	\big((u-u_{ms})_t,v\big) + a(u-u_{ms},v)  = \big(f(u)-f(u_{ms}),v\big).
	\end{equation}
The  noise term disappears because diffusion $g(x,t)$ is independent of the state $u$.
	By following  the proof of Theorem \ref{thm2}, the proof is done.
\end{proof}

\section{Model reduction for nonlinear terms}\label{reduct}
Let  $\{\varphi_i(x)\}_{i=1}^{N_r}$ be the set of multiscale basis functions in CEM-GMsFEM.   Then we have an  approximation,
\[
u\approx u_{ms}= \sum_{i=1}^{N_r}u^i_{ms}(t) \varphi_i(x).
\]
After  we apply  CEM-GMsFEM to SPDE(\ref{ex-eq1}), CEM-GMsFEM yields a discrete dynamical system of nonlinear ODE:
\begin{equation}\label{ms}
\left\{\begin{aligned}
\mathbf{M}\frac{d \mathbf{u_{ms}}}{dt}+A\mathbf{u_{ms}}(t)&=\mathbf{F}(\mathbf{u_{ms}}(t))\\
\mathbf{u_{ms}^0}&= \mathbf{u_{ms}}(0),
\end{aligned}
\right.
\end{equation}
where $\mathbf{u_{ms}}=[u^1_{ms},u^2_{ms},\cdots,u^{N_r}_{ms}]^T\in \mathbb{R}^{N_r}$ and
\begin{equation*}
\begin{aligned}
\mathbf{M}_{ij}&=\int_{D}\varphi_i(x)\varphi_j(x),\\
A_{ij}&=\int_D \kappa(x)\nabla\varphi_i(x)\nabla\varphi_j(x),\\
{\mathbf{F}(\mathbf{u_{ms}}(t))}_{i}&\approx\int_D \bigg(f\big(\sum_{l=1}^{N_r}u^l_{ms}\varphi_l(x)\big)+g\big(\sum_{l=1}^{N_r}u^l_{ms}\varphi_l(x)\big)\xi(t)\bigg)\varphi_i(x).
\end{aligned}
\end{equation*}
Notice that $\mathbf{M},\,A \in \mathbb{R}^{N_r\times N_r}$, and $\mathbf{F}: \mathbb{R}^{N_r} \rightarrow \mathbb{R}^{N_r} $ is a nonlinear function. Although $N_r$, number of multiscale basis functions, is usually much smaller than number of fine-grid FEM basis functions,  computation  of the above system (\ref{ms}) is still challenging because the high-dimensionality is still in the nonlinear term and we need to compute many trajectories for
 statistical estimation of the solution.  Thus it is desirable to significantly reduce the nonlinear dynamic system.

In this section, we first briefly review two important model reduction methods:  POD and DEIM.  Finally we introduce a  stochastic online DEIM, which can substantially improve the computation efficiency of the nonlinear stochastic  system (\ref{ms}). For simplicity of presentation, we rewrite the system (\ref{ms}) by
\begin{equation}\label{nonlinear}
\frac{d \mathbf{y}}{dt}=A\mathbf{y}(t)+\mathbf{f}(\mathbf{y}(t)).
\end{equation}
 Here  we have abused  the notation for linear coefficient matrix $A$ and nonlinear function $\mathbf{f}$ corresponding to $\mathbf{M}^{-1}A$ and $\mathbf{M}^{-1}\mathbf{F}$ respectively in (\ref{ms}), and $\mathbf{y}$ represent the unknown solution that we are interested in.

\subsection{Proper orthogonal decomposition}

In the POD approach, we simulate the system by a reduced basis. Let  $\{\mathbf{y}(t_i)\}_{i=1}^M$ be the solution vectors at $M$ time levels, and
 \[
 Y=[\mathbf{y}(t_1),\mathbf{y}(t_2),\cdots,\mathbf{y}(t_M)] \in \mathbb{R}^{n\times M}
  \]
  be the snapshot matrix. POD constructs an orthogonal basis $V\in \mathbb{R}^{n\times r}$, in which the basis vectors are the first $r\leq \text{rank}(Y)$  left singular vectors of $Y$ corresponding to the first $r$ largest singular values. The basis is also a solution of the following minimization problem
\[
\min_{V\in \mathbb{R}^{n\times r}, V^{T}V=I_r} \|(I_n-VV^{T})Y\|_F^2.
\]
Thus  POD-Galerkin method gives a reduced system of (\ref{nonlinear})
\begin{equation}\label{reduced}
\frac{d \hat{\mathbf{y}}}{dt}=V^{T}AV\hat{\mathbf{y}}(t)+V^{T}\mathbf{f}(V\hat{\mathbf{y}}(t)),
\end{equation}
and the solution $\mathbf{y}(t)$ of (\ref{nonlinear}) is approximated by $\mathbf{y}(t)\approx V\hat{\mathbf{y}}(t)$. However, solving the reduced system (\ref{reduced}) instead of (\ref{nonlinear}), we still need evaluations of the nonlinear term $\mathbf{f}$ at all $n$ components,  the simulation of  (\ref{reduced}) is as expensive as solving the original system.
DEIM can be used  to overcome this difficulty.

\subsection{Discrete empirical interpolation method}\label{deim}
DEIM provides an approximation of the nonlinear function $\mathbf{f}$ by a projection (interpolation), which maps a high $n-$dim space to a low $m-$dim space ($m\ll n$)  spanned by the basis $\mathbf{U}=[\mathbf{u}_1,\mathbf{u}_2,\cdots,\mathbf{u}_m]\in \mathbb{R}^{n\times m}$.  DEIM gives
\begin{equation}\label{f=uc}
\mathbf{f}\approx \mathbf{U}\mathbf{c},
\end{equation}
where $\mathbf{c}$ is an unknown vector. The projection basis $\mathbf{U}$ is obtained by applying POD to an  appropriate set of the nonliear snapshots
\[
\{\mathbf{f}(\mathbf{y}(t_1)),\cdots,\mathbf{f}(\mathbf{y}(t_M))\} \subset \mathbb{R}^{n}.
\]
The system (\ref{f=uc}) is overdetermined. In order to solve it, we multiplies the equation by the transpose of the matrix $P\in \mathbb{R}^{n\times m}$,
\[
P^T\mathbf{f}=P^T\mathbf{U}\mathbf{c}.
\]
Thus, if $P^T\mathbf{U}$ is non-singular, then we can determine $\mathbf{c}=(P^T\mathbf{U})^{-1}P^T\mathbf{f}$. Therefore, we have an  approximation of nonlinear term,
\begin{equation}\label{nl-app}
\mathbf{f}\approx \mathbf{U}(P^T\mathbf{U})^{-1}P^T\mathbf{f}.
\end{equation}
In particular, DEIM chooses $m$ pairwise different interpolation points $p_1,\cdots,p_m \in \{1,\cdots,n\}$ and $P=[e_{p_1},e_{p_2},\cdots,e_{p_m}],$ where $e_{p_i}=[0,\cdots,0,\cdots,1,0,\cdots,0]^T\in \mathbb{R}^n$ is the $p_i$-th unit vector. The interpolation points are chosen by a greedy method \cite{deim-points},    which is specified by the basis matrix.  The interpolation points matrix $P$ and the DEIM basis $\mathbf{U}$ are selected such that $P^T\mathbf{U}$ has full rank.
The approximation (\ref{nl-app}) implies that the nonlinear term $\mathbf{f}$ can be only evaluated at $m$ entries specified by $P^T$ instead of all $n$ components.  Thus DEIM gives a reduced system,
\[
\frac{d \hat{\mathbf{y}}}{dt}=V^{T}AV\hat{\mathbf{y}}(t)+V^{T}\mathbf{U}(P^T\mathbf{U})^{-1}P^T\mathbf{f}(V\hat{\mathbf{y}}(t)).
\]

\subsection{Stochastic online DEIM}\label{onlinedeim}
For stochastic systems, we may have the nonlinear snapshots along several different  trajectories $\{\mathbf{y}^1,\mathbf{y}^2,\mathbf{y}^3, \cdots,\mathbf{y}^k\}$ at times $\{t_1,t_2,\\\cdots,t_M\}$:
\begin{equation*}
\begin{aligned}
\{\mathbf{f}(\mathbf{y}^1(t_1)),\cdots,\mathbf{f}(\mathbf{y}^1(t_M))\} \subset \mathbb{R}^{n},\\
\{\mathbf{f}(\mathbf{y}^2(t_1)),\cdots,\mathbf{f}(\mathbf{y}^2(t_M))\}\subset \mathbb{R}^{n},\\
\{\mathbf{f}(\mathbf{y}^3(t_1)),\cdots,\mathbf{f}(\mathbf{y}^3(t_M))\}\subset \mathbb{R}^{n},	\\
\vdots\quad\quad \qquad\quad\,\,\quad \qquad \\
\{\mathbf{f}(\mathbf{y}^k(t_1)),\cdots,\mathbf{f}(\mathbf{y}^k(t_M))\}\subset \mathbb{R}^{n}.
\end{aligned}
\end{equation*}	
 Although we can apply DEIM to each trajectory, the computation complexity will be  proportional to the number of trajectories.
  To take account of this issue, we apply DEIM to mean of a few  trajectories available. Let the mean snapshots
  \[\{\mathbf{f}(\bar{\mathbf{y}}(t_1)),\cdots,\mathbf{f}(\bar{\mathbf{y}}(t_M))\} \subset \mathbb{R}^{n},
  \]
where $\mathbf{f}(\bar{\mathbf{y}}(t_i))= \mathbf{f}\big(\frac{1}{k}\sum_{j=1}^k \mathbf{y}^j(t_i)\big)$ $i=1,2,\cdots,M$. Thus we get the approximation
\[
\mathbf{f}(\bar{\mathbf{y}}(t))\approx \bar{\mathbf{U}}(\bar{P}^T\bar{\mathbf{U}})^{-1}\bar{P}^T\mathbf{f}(\bar{\mathbf{y}}(t)),
\]
where the DEIM basis $\bar{\mathbf{U}}$ is obtained by the SVD of snapshot matrix $[\mathbf{f}(\bar{\mathbf{y}}(t_1)),\cdots,\\\mathbf{f}(\bar{\mathbf{y}}(t_M))]$, and $\bar{P}$ is the interpolation point matrix.

 However, when a new trajectory is realised, i.e.,  $ \{\mathbf{f}(\tilde{\mathbf{y}}(t_1)),\cdots,\mathbf{f}(\tilde{\mathbf{y}}(t_M))\} \subset \mathbb{R}^{n}$ is available, the reduced model obtained by dealing with the mean of all previous trajectories  may  not capture the new information from the new trajectory. In order to take the  new information, we introduce a  stochastic online adaptive DEIM, which  updates the orthogonal basis matrix online with respect to any new trajectory.

 Based on the reduced model  $(\bar{\mathbf{U}},\bar{P})$   obtained already in the offline phase, we provide the online updated basis $\tilde{\mathbf{U}}$ to approximate the nonlinear function along any new trajectory, and repeatedly use  the unchanged interpolation matrix $\tilde{P}=\bar{P}\triangleq P$. To this end, we need to  evaluate the DEIM approximation quality of nonliear function by a residual, and use the residual to construct a online adaptive basis.

 The traditional  DEIM interpolates $\mathbf{f}(\bar{\mathbf{y}}(t))$ at the interpolation indexes, and  we have a zero-valued residual
\[
\mathbf{r}(\bar{\mathbf{y}}(t))=\bar{\mathbf{U}}(P^T\bar{\mathbf{U}})^{-1}P^T\mathbf{f}(\bar{\mathbf{y}}(t))-\mathbf{f}(\bar{\mathbf{y}}(t))
\]
at the interpolation points. However, for a new trajectory, the residual
\[
\mathbf{r}(\tilde{\mathbf{y}}(t))=\bar{\mathbf{U}}(P^T\bar{\mathbf{U}})^{-1}\\P^T\mathbf{f}(\tilde{\mathbf{y}}(t))-\mathbf{f}(\tilde{\mathbf{y}}(t))
\]
is not zero any more, i.e.,
\[
\|P^T\mathbf{r}(\tilde{\mathbf{y}}(t))\|_2>0.
\]

 As in section \ref{deim}, we have
\[
\mathbf{c}(\tilde{\mathbf{y}}(t))=(P^T\bar{\mathbf{U}})^{-1}P^T\mathbf{f}(\tilde{\mathbf{y}}(t)),
\]
and  $\mathbf{C}\in \mathbb{R}^{m\times M}$ is the coefficient matrix with the coefficients $\mathbf{c}(\tilde{\mathbf{y}}(t_1)),\mathbf{c}(\tilde{\mathbf{y}}(t_2)),\cdots,\\\mathbf{c}(\tilde{\mathbf{y}}(t_M))$ as columns.
Then we update the basis $\tilde{\mathbf{U}}=\bar{\mathbf{U}}+\Delta \mathbf{U}$ such that the updated basis minimizes  Frobenius norm of the residual at the observation time levels, i.e.,
\begin{equation}\label{updatedU}
\tilde{\mathbf{U}}=\arg\min_{\mathbf{U}} \|P^T(\mathbf{U}\mathbf{C}-\mathbf{F})\|_F^2,
\end{equation}
where $\mathbf{F}=[\mathbf{f}(\tilde{\mathbf{y}}(t_1)),\cdots,\mathbf{f}(\tilde{\mathbf{y}}(t_M))]\in \mathbb{R}^{n\times M}$ is  the evaluation of the nonlinear function along the new trajectory.
 Similar to Section 4.2, only $P^T\mathbf{F}\in \mathbb{R}^{m\times M}$ is required in (\ref{updatedU}),  not the complete matrix $\mathbf{F}\in \mathbb{R}^{n\times M}$. Let
  the approximation residual matrix $\mathbf{R}=\bar{\mathbf{U}}\mathbf{C}-\mathbf{F}$.  Then we solve the minimization problem
\begin{equation}\label{opm}
\arg\min_{\Delta\mathbf{U}\in \mathbb{R}^{n\times m}} \|P^T\mathbf{R}+P^T\Delta\mathbf{U}\mathbf{C}\|_F^2.
\end{equation}
  We note that the unknown matrix $\Delta \mathbf{U}$ is both premultiplied and postmultiplied.  The optimization problem (\ref{opm}) is technically tricky. To overcome the difficulty,  we decompose it into two  optimization problems, and use the Moore-Penrose pseudo-inverse to solve it approximately,
\begin{equation}\label{de1}
\arg\min_{\mathbf{b}\in \mathbb{R}^{n\times M}} \|P^T\mathbf{b}+P^T\mathbf{R}\|_F^2,
\end{equation}
\begin{equation}\label{de2}
\arg\min_{\Delta\mathbf{U}\in \mathbb{R}^{n\times m}}\|\Delta\mathbf{U}\mathbf{C}-\mathbf{b}\|_F^2.
\end{equation}
Since the interpolation matrix is a permutation matrix, which is invertible, we have $\mathbf{b}=-\mathbf{R}$ from (\ref{de1}). After taking  a transposition in (\ref{de2}),
we have
\[
\Delta\mathbf{U}=\mathbf{b}\left((\mathbf{C}^T)^{\dag}\right)^T=-\mathbf{R}\mathbf{C}^T(\mathbf{C}\mathbf{C}^T)^{-1}.
\]
This  is an approximation of (\ref{opm}).

\par Finally, We end this section with Algorithm \ref{Alg-Online}, where we give the main  steps of the stochastic online DEIM.
\begin{algorithm}[H]
	\caption{Stochastic Online DEIM} \label{Alg-Online}
	\raggedright{\bf Input:}  
	nonlinear snapshots along multiple realizations $\{\mathbf{f}(\mathbf{y}^i(t_1)),\cdots,\mathbf{f}(\mathbf{y}^i(t_M))\}_{i=1}^k \subset \mathbb{R}^{n}$ \\                           
	\raggedright{\bf Output:}
	Online updated basis matrix $\tilde{\mathbf{U}}$, interpolation matrix $P$, reduced approximation $\mathbf{f}$ \\
\raggedright \textbf{1}: Get mean snapshots matrix $[\mathbf{f}(\bar{\mathbf{y}}(t_1)),\cdots,\mathbf{f}(\bar{\mathbf{y}}(t_M))]$, \\
			where $\mathbf{f}(\bar{\mathbf{y}}(t_i))= \mathbf{f}(\frac{1}{k}\sum_{j=1}^k \mathbf{y}^j(t_i))$,  $i=1,\cdots,M$\\
\raggedright \textbf{2}: Apply DEIM to mean snapshots matrix, and get ($\bar{\mathbf{U}},\bar{P}$)\\
\raggedright \textbf{3}: Compute coefficient matrix $\mathbf{C}$, with columns  $\{\mathbf{c}(\tilde{\mathbf{y}}(t_i))=(\bar{P}^T\bar{\mathbf{U}})^{-1}\bar{P}^T\mathbf{f}(\tilde{\mathbf{y}}(t_i))\}_{i=1}^M$\\
\raggedright \textbf{4}:	 Compute residual matrix $\mathbf{R}=\bar{\mathbf{U}}\mathbf{C}-\mathbf{F}$, where $\mathbf{F}=[\mathbf{f}(\tilde{\mathbf{y}}(t_1)),\cdots,\mathbf{f}(\tilde{\mathbf{y}}(t_M))]$\\
\raggedright \textbf{5}: Solve optimization problem and have $\Delta\mathbf{U}=-\mathbf{R}\left((\mathbf{C}^T)^{\dag}\right)^T$\\
\raggedright \textbf{6}: Update basis matrix $\tilde{\mathbf{U}}=\bar{\mathbf{U}}+\Delta\mathbf{U}$, and remain interpolation matrix $P=\bar{P}$ unchanged\\
\raggedright \textbf{7}: Approximate $\mathbf{f}(\tilde{\mathbf{y}}(t))\approx \tilde{\mathbf{U}}(P^T\tilde{\mathbf{U}})^{-1}P^T\mathbf{f}(\tilde{\mathbf{y}}(t))$
\end{algorithm}


\section{Numerical results}\label{num}
In this section, we present several numerical examples to verify the previous analysis and demonstrate the efficacy of  stochastic online model reduction approach.  In the numerical simulations of this paper, we will not focus on the coarsening effect in CEM-GMsFEM because  there exist many  literatures \cite{Eric2017CEM, Mengnan2019} that have extensively addressed it. In Section \ref{ex1},  we compare the behaviours of CEM-GMsFEM in deterministic system and stochastic system. In Section \ref{ex2}, some different methods are applied to an SPDE  and show the advantage  of Online DEIM over other variants of DEIM.
A coupled stochastic porous media dynamical system is considered in Section \ref{ex3}   to illustrate the applicability of Online DEIM-MS in coupled systems.

For the numerical examples in this section, the computational spatial domain is $D=[0,1]\times [0,1]$.  We divide the domain $D$ into a $100\times 100$ fine grid, and the coarse grid is  $10\times 10$.  We take  number of local basis functions $L_i=4$ and oversampling layers $m=4$ by the empirical numerical results in  \cite{Mengnan2019}.  We will keep  the same setting for $L_i,\, m$ in the  simulations.
The solution with FEM on the fine grid is used  as the  reference solution. For simplicity of presentation, we will use MS solution
  to denote  CEM-GMsFEM solution in  figures and tables.

\subsection{Comparison in a deterministic system  and a stochastic system}\label{ex1}
Firstly, we will illustrate  the difference of the behaviours of CEM-GMsFEM between a deterministic system and
a stochastic system. They are specified as following,
\begin{equation}
(d)\left\{
\begin{aligned}
\frac{\partial u}{\partial t} - \nabla \cdot \big(\kappa(x) \nabla u\big)&=10\cos(u) \quad\quad\quad\quad \quad\quad in\quad D \times (0,T] ,\\
u(x,t)&=0 \quad\quad\quad\quad\quad\quad\quad\quad\quad\quad\, on\quad \partial D\times (0,T],\\
u(x,0)&=2\pi \sin(2\pi x_1)\sin(2\pi x_2) \quad\, in\quad D,
\end{aligned}
\right.
\end{equation}
\begin{equation}
(s)\left\{
\begin{aligned}
du - \nabla \cdot \big(\kappa(x) \nabla u\big)dt&=10\cos(u)dt+(u^2+10)dW(t) \quad in\quad D \times (0,T] ,\\
u(x,t)&=0 \quad\quad\quad\quad\quad\quad\quad\quad\quad\quad\quad\quad\quad\, on\quad \partial D\times (0,T],\\
u(x,0)&=2\pi \sin(2\pi x_1)\sin(2\pi x_2) \quad\quad\quad\quad in\quad D.
\end{aligned}
\right.
\end{equation}
The multiscale coefficient $\kappa(x)$ in the above equations is depicted in Figure \ref{coef}.
\begin{figure}[hbtp]
	\begin{minipage}[t]{0.5\linewidth}
		\centering
		\includegraphics[width=2.8in, height=2.3in]{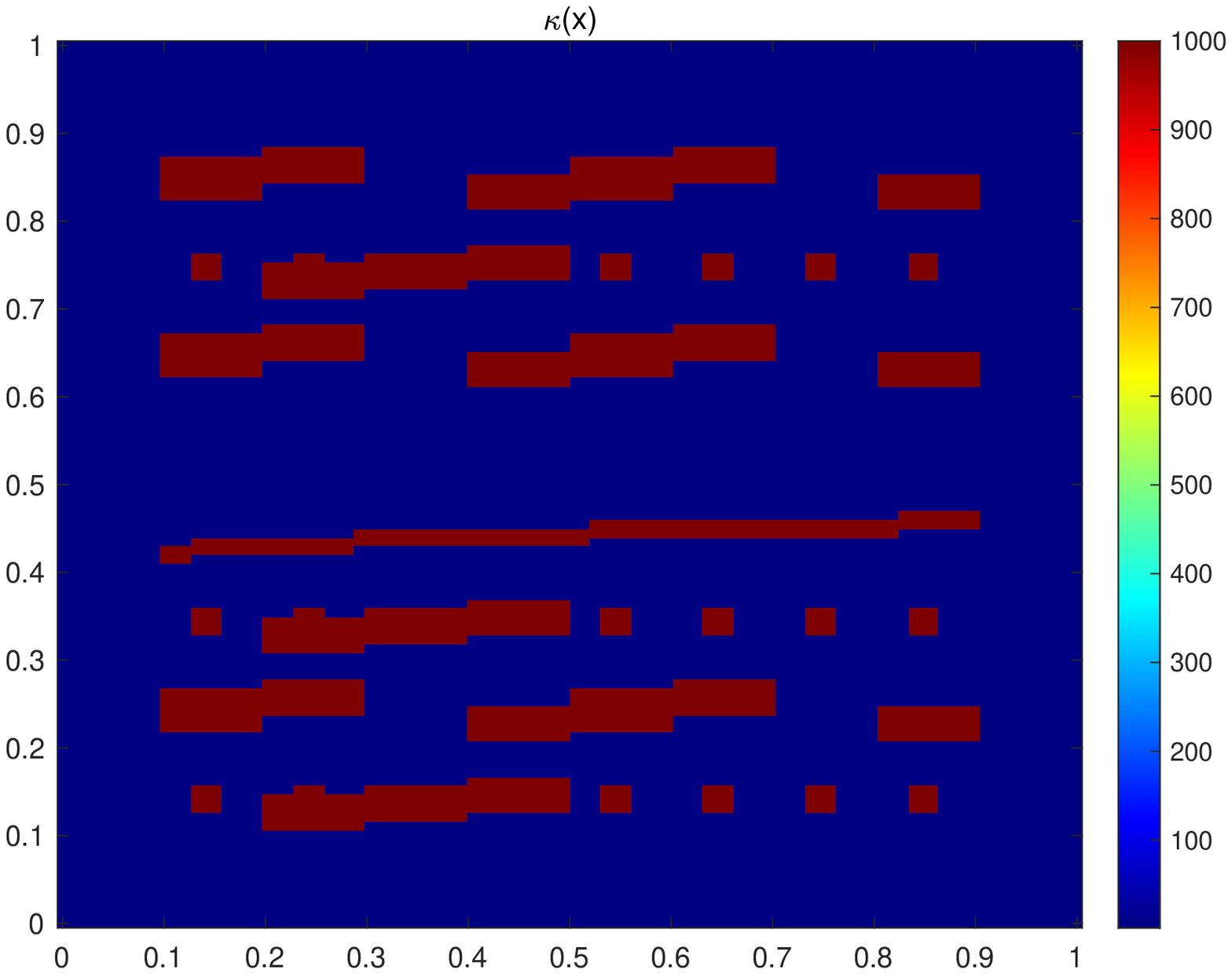}
		\caption{Multiscale coefficient $\kappa(x)$ }\label{coef}
	\end{minipage}
	\begin{minipage}[t]{0.5\linewidth}
		\centering
		\includegraphics[width=2.8in, height=2.3in]{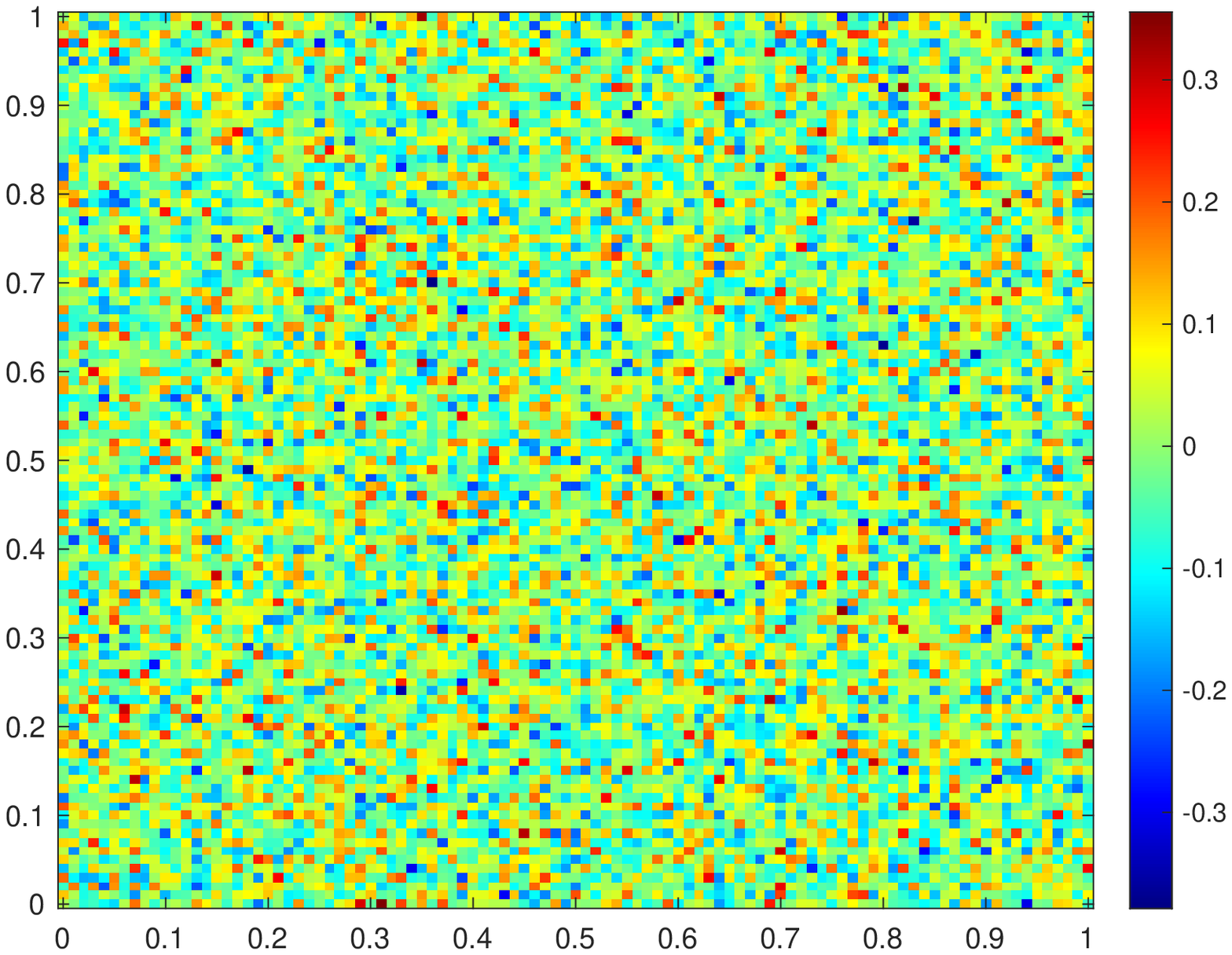}
		\caption{Brownian motion increment}\label{dw_bm}
	\end{minipage}
\end{figure}

   For the stability of numerical method, we choose the stochastic full-implicit scheme 
to treat the  temporal discretization. Then   the full discretization reads as
\begin{equation*}\label{implicit scheme}
u^{k+1}_{ms}=u^{k}_{ms}+A_{\kappa}u^{k+1}_{ms}\Delta t_k+f(u^{k+1}_{ms})\Delta t_k+g(u^k_{ms})\Delta W_k.
\end{equation*}
 The temporal is partitioned uniformly   by the interval $\Delta t=0.01$. We consider the Brownian motion with three different covariance coefficients  $Q=0.01,1,100$ as three degenerated $\mathcal{Q}$-Wiener processes. Figure \ref{dw_bm} shows   a sample of Brownian motion increment  with time interval $\Delta t$ and covariance  coefficients $Q=1$. Since the drift term $\cos(u)$ is nonlinear, we get a nonlinear equation of the unknown state $u_{ms}^{k+1}$, and use  Newton iteration to solve the nonlinear system.
Because we focus on the performance of CEM-GMsFEM, we do not use any DEIM to reduce the nonlinear model in this subsection.
 We will compute  the CEM-GMsFEM solutions of the deterministic and stochastic systems with the three different coefficients  $Q$.

\begin{figure}[htbp]
	\centering
	\includegraphics[width=5.5in]{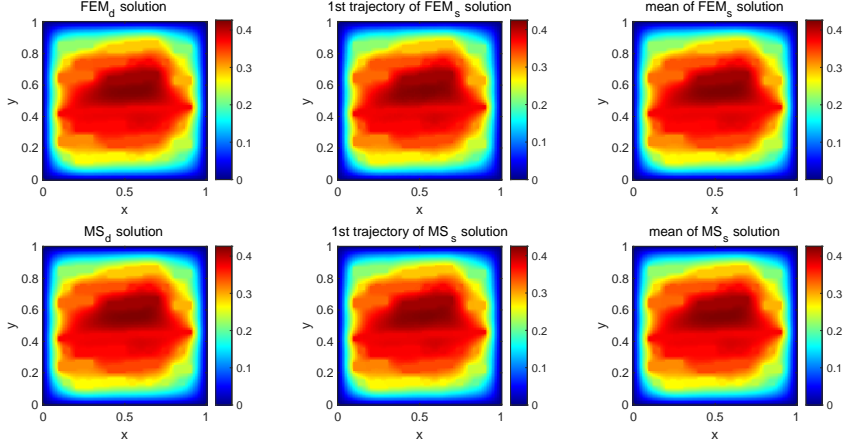}
	\caption{\textit{Solution profiles at $t=1$. Top left: FEM reference solution of deterministic system; Top middle: the first trajectory of FEM reference solution  of stochastic system with covariance coefficient $Q=0.01$; Top right: the mean of FEM reference solution  of stochastic system; Bottom: Corresponding CEM-GMsFEM solutions. }}\label{sol_det_stoQ001}
\end{figure}

 Figure \ref{sol_det_stoQ001} shows solutions of deterministic system and stochastic system at $t=1$. For the stochastic system,  mean of solutions is approximated by averaging  100 stochastic realizations. We notice that  individual behaviour (middle) and mean behaviour (right) are almost same because of a small {\color{black}coefficient $Q$} in the stochastic system. Their solution profiles  are almost the  same as the solution of deterministic system.
\begin{figure}[htbp]
	\centering
	\includegraphics[width=4.2in, height=3in]{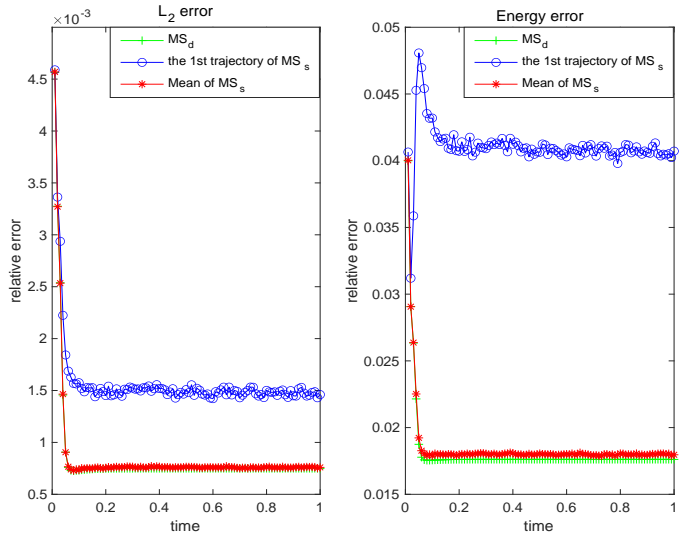}
	\caption{\textit{ Relative $L_2$  errors (left) and  relative energy  errors (right) for  MS solution (green $+$) of deterministic problem, MS solution mean (red $*$) and one trajectory of MS solution (blue $o$), $Q=0.01$.}}\label{err_det_stoQ001}
\end{figure}

 To evaluate the approximation accuracy, Figure \ref{err_det_stoQ001} shows the   relative $L_2$ error and  relative energy  error of CEM-GMsFEM solutions. Here we take the 100 stochastic realizations to approximate expectation of stochastic solutions and reference solution is the   FEM solution in fine grid.   From this figure, we see that error of the individual  trajectory of CEM-GMsFEM solution is much larger than the other two errors: $\text{MS}_d$ and mean of $\text{MS}_s$. This shows  that randomness of individual path may impact on the approximation  of CEM-GMsFEM because  we do not consider stochastic term when we construct the basis functions of CEM-GMsFEM.  Moreover, $L_2$ error of
mean of MS$_s$ is   almost the same as  the error of MS$_d$, while energy error of mean of MS$_s$  is slightly larger than MS$_d$.

\begin{figure}[htbp]
	\centering
	\includegraphics[width=5.5in]{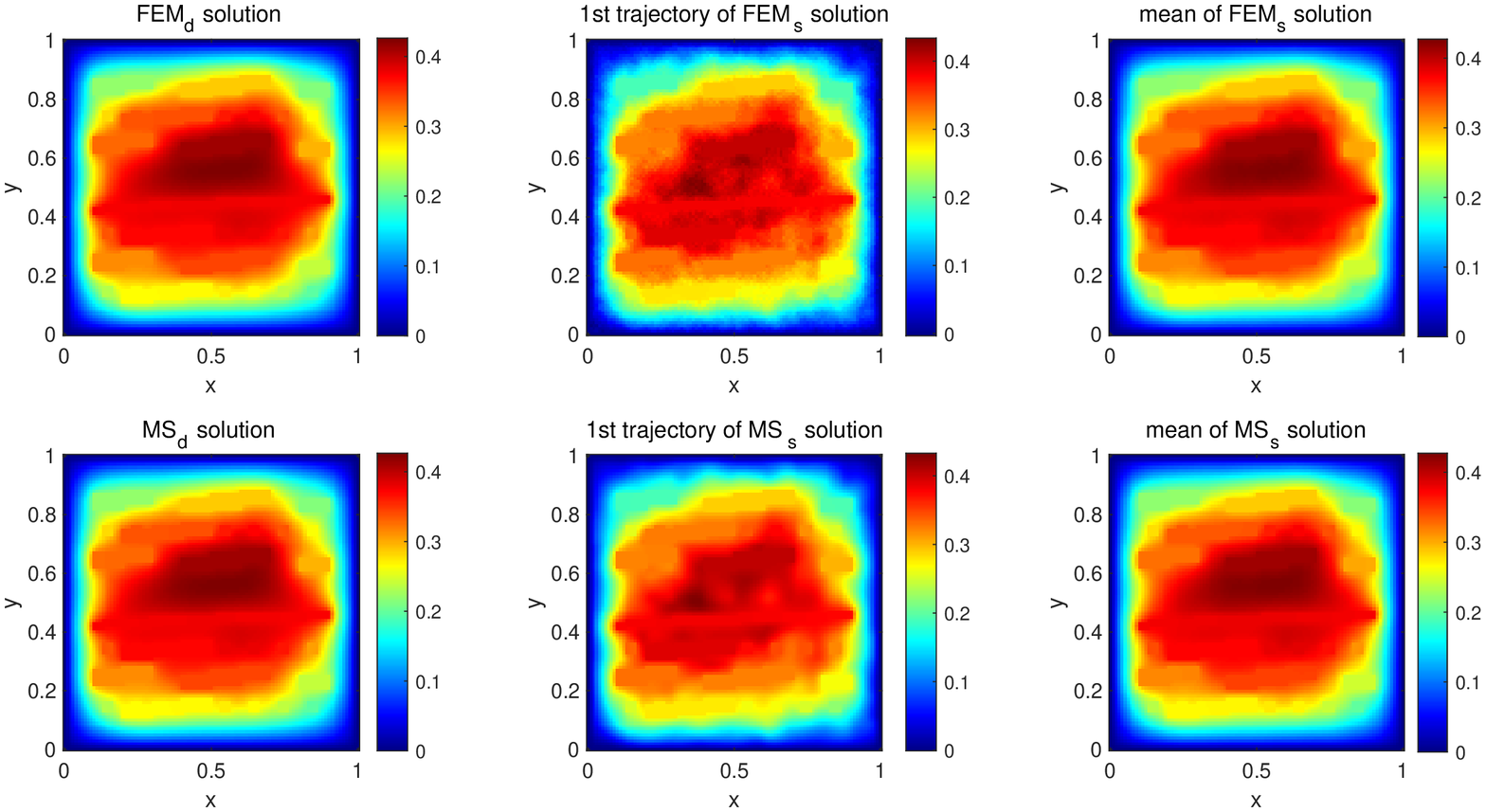}
	\caption{\textit{Solution profiles at $t=1$. Top left: FEM reference solution of deterministic system; Top middle: the first trajectory of FEM reference solution  of stochastic system with covariance coefficient $Q=1$; Top right: the mean of FEM reference solution  of stochastic system; Bottom: Corresponding CEM-GMsFEM solutions.}}\label{sol_det_stoQ1}
\end{figure}

 When the covariance coefficient $Q$ of Wiener-process increases, the trajectories  of SPDE may show significant difference among them.   By the middle plot in Figure \ref{sol_det_stoQ1} ($Q=1$), the first trajectory of the SPDE solution shows a random  perturbation comparing to deterministic model solution and mean of SPDE solution. However, mean of stochastic FEM solution behaves more like FEM$_d$ solution. Figure \ref{sol_det_stoQ1} ($Q=1$) shows that CEM-GMsFEM solutions
 approximate the FEM solutions in fine grid.  As $Q=0.01$ case, we illustrate $L_2$ relative error and energy relative error of CEM-GMsFEM solutions in Figure \ref{err_det_stoQ1} with FEM solutions as reference solutions. We see that, for the stochastic model, $L_2$ error and energy error of the first path are both much larger than deterministic CEM-GMsFEM solution. The error of mean of  CEM-GMsFEM solution to the stochastic model  is also slightly
 larger than the error of CEM-GMsFEM solution to the deterministic model. The reason may be from Monte Carlo sampling.

\begin{figure}[hbtp]
	\centering
	\includegraphics[width=4.2in, height=3in]{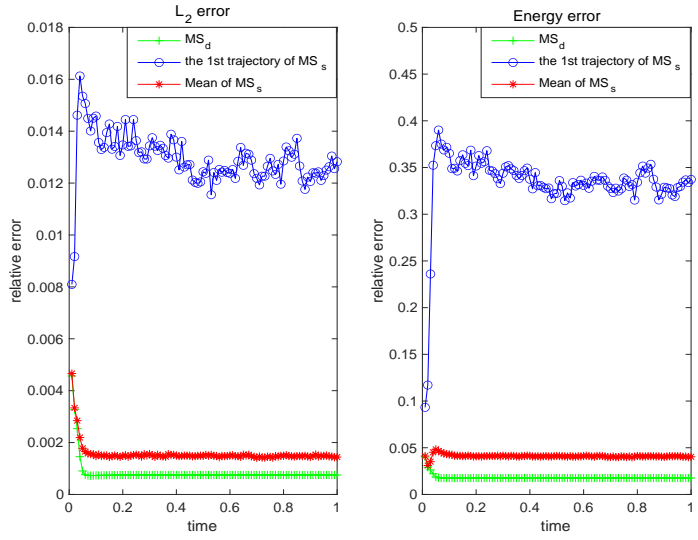}
	\caption{\textit{Relative $L_2$ errors (left) and relative energy  errors (right) for  MS solution (green $+$) of deterministic problem, MS solution mean (red $*$) and one trajectory of MS solution (blue $o$).  $Q=1$.}}\label{err_det_stoQ1}
\end{figure}

\begin{figure}[hbtp]
	\centering
	\includegraphics[width=5.5in]{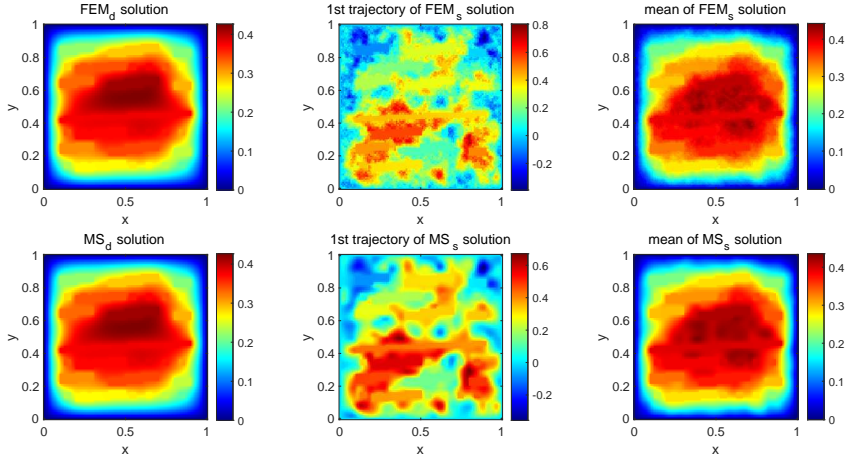}
	\caption{\textit{Solution profiles at $t=1$. Top left: FEM reference solution of deterministic system; Top middle:  the first trajectory of FEM reference solution  of stochastic system with covariance coefficient $Q=100$; Top right: the mean of FEM reference solution of stochastic system; Bottom: Corresponding CEM-GMsFEM solutions.}}\label{sol_det_stoQ10}
\end{figure}

 Comparing Figure \ref{sol_det_stoQ10} ($Q=100$)  with Figure \ref{sol_det_stoQ001} and Figure \ref{sol_det_stoQ1}, we observe that {\color{black} the larger of the covariance coefficient $Q$ leads to  less accuracy  of CEM-GMsFEM.} This is because we construct  the multiscale basis functions   without considering the stochastic influence, i.e., the stochastic perturbation of the Wiener process.  By Figure \ref{err_det_stoQ10},  we also  see that the error of mean of CEM-GMsFEM solution is much smaller than the error of individual trajectory.

\begin{figure}[hbtp]
	\centering
	\includegraphics[width=4.2in, height=3in]{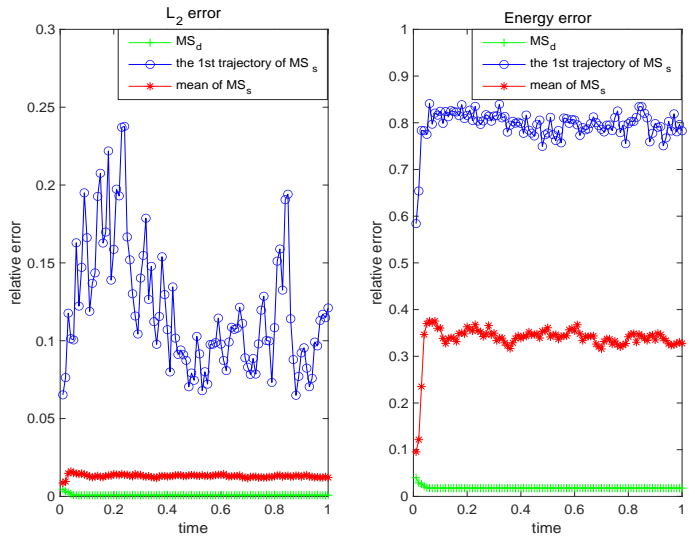}
	\caption{\textit{$L_2$ Relative $L_2$ errors (left) and  relative energy errors (right) for  MS solution (green $+$) of deterministic problem, MS solution mean (red $*$) and one realization of MS solution (blue $o$).   $Q=100$.}}\label{err_det_stoQ10}
\end{figure}


\begin{figure}[hbtp]
	\centering
	\includegraphics[width=4.5in, height=3.1in]{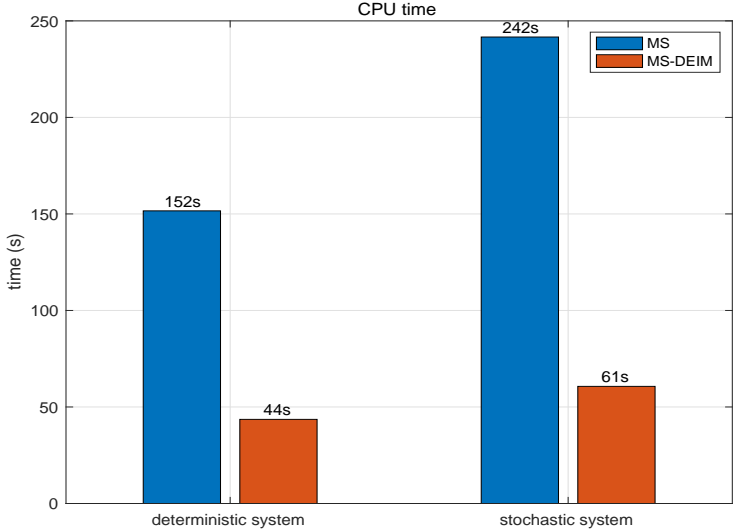}
	\caption{CPU time for the deterministic model and the stochastic model.}	\label{exm-his1}
	\label{fig:my_label}
\end{figure}

Moreover, we compare the CPU time of the different methods.  For the nonlinear terms, we use two different models for simulation: full-order model and DEIM reduced model.
	Figure \ref{exm-his1} illustrates the CUP time for deterministic model and stochastic model using two different methods: CEM-GMsFEM (MS) and CEM-GMsFEM-DEIM (MS-DEIM).
By the figure, we find that the CUP time for stochastic model is longer than the deterministic model. This is because that extra computation  is required for the diffusion term of white noise.
 The figure also clearly shows that DEIM take much less time than Newton method for both deterministic nonlinear model and stochastic nonlinear model.


\subsection{Stochastic online DEIM with CEM-GMsFEM}\label{ex2}
In this subsection, we will focus on the stochastic online DEIM incorporating with CEM-GMsFE method. The approach is referred to as Online DEIM-MS for abbreviation.
 We concentrate on two important problems about the approach: effect of snapshots in DEIM and performance of individual stochastic  realization.

\subsubsection{Effect of snapshot in offline phase and  online phase of DEIM}\label{ex21}
Stochastic online DEIM needs to use snapshots from offline phase and online phase: the offline snapshots consist of  mean information, and online snapshots consist of  new trajectory information. Thus different choices of snapshots in offline and online phase produce the different variants of online DEIM.

In this {\color{black}subsection},  we compute the relative errors from the online DEIM using different snapshots and explore the impact of the offline snapshots and online snapshots.
 Meanwhile, FEM solution and CEM-GMsFEM solution are also computed to assess the effectiveness  of online DEIM-MS. For numerical simulation, we solve the following SPDE,
\begin{equation}\label{ex21-eq}
\left\{
\begin{aligned}
du - \nabla \cdot \big(\kappa(x) \nabla u\big)dt&=\cos(u)dt+(u^2+2)dW(x,t) \quad in\quad D \times (0,T] ,\\
u(x,t)&=0 \quad\quad\quad\quad\quad\quad\quad\quad\quad on\quad \partial D\times (0,T],\\
u(x,0)&=\sin(2\pi x_1)\sin(2\pi x_2) \quad\, in\quad D,
\end{aligned}
\right.
\end{equation}
where the temporal partition and multiscale coefficient $\kappa(x)$ are chosen same as the  example \ref{ex1}.

\begin{figure}[H]
	\centering
	\includegraphics[width=3.7in, height=2.7in]{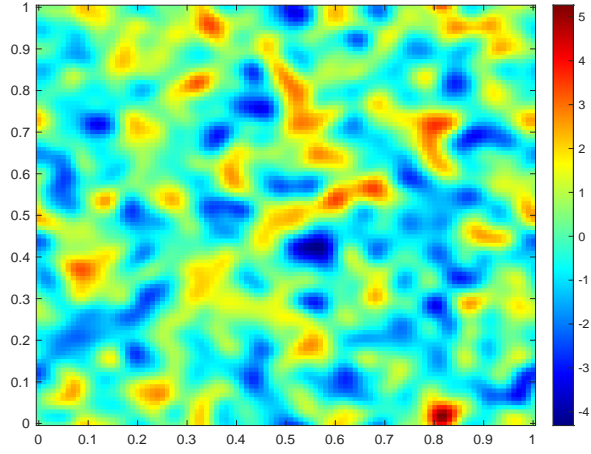}
	\caption{A sample of the $\mathcal{Q}$-Wiener process increment $\Delta W^J(x,t)$}
	\label{dw_Qwiener}
\end{figure}

 In this subsection, we use a space-time  $\mathcal{Q}$-Wiener process described in  \cite{QWiener1}.
The  covariance operator  $\mathcal{Q}$ has the  eigenfunctions
  \[
e_{j_1,j_2}(x)=\frac{1}{\sqrt{a_1a_2}}\text{e}^{2\pi i j_1x_1/a_1}\text{e}^{2\pi i j_2 x_2/a_2}
\]
 and eigenvalues ${\color{black}\mu}_{j_1,j_2}=\text{e}^{-\alpha \gamma_{j_1,j_2}},\, \alpha >0 $ and $\gamma_{j_1,j_2}=j_1^2+j_2^2$.  The approximation of $W(x, t)$ can be defined by
\begin{equation}\label{kle-tru}
W^J(x,t):=\sum_{j_1=-J_1/2+1}^{J_1/2}\sum_{j_2=-J_2/2+1}^{J_2/2}\sqrt{{\color{black}\mu}_{j_1,j_2}}e_{j_1,j_2}(x)\beta_{j_1,j_2}(t).
\end{equation}
We illustrate a sample of $\mathcal{Q}$-Wiener process increment with $J_1=J_2=100$, $\alpha=0.0005$ in Figure \ref{dw_Qwiener}.

\begin{figure}
	\begin{minipage}[t]{1\linewidth}
		\centering
		\includegraphics[width=4.5in]{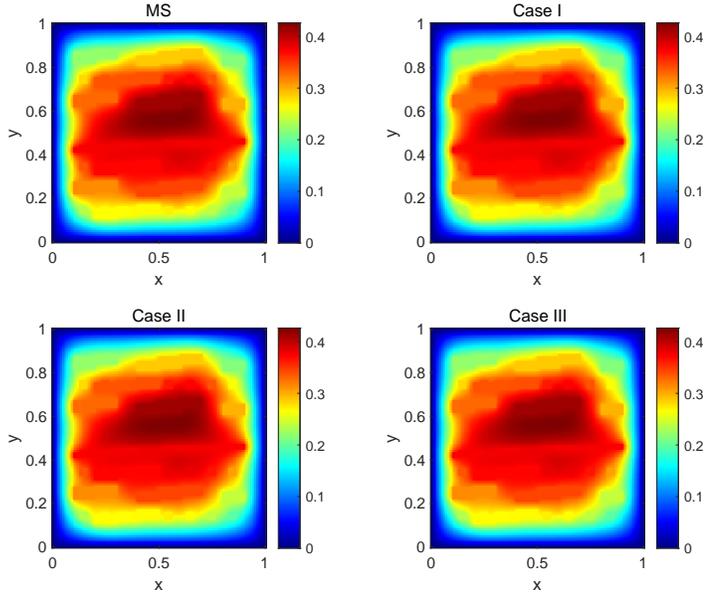}
		\caption{\textit{Solution profiles at time $t=1$. Left top:  CEM-GMsFEM solution; Right top: Case I; Left bottom: Case II; Right bottom: Case III.}}\label{compared solution}
	\end{minipage}
\end{figure}

For offline snapshots in DEIM, we compute $100$  solutions of  trajectories  to get mean information.
Here, we compare three different cases of  offline-online.
 In Case I, we take mean of nonlinear evaluation in the first half time interval, i.e.,
\[
f(\bar{u}(x,t)),\, g(\bar{u}(x,t)),\, (x,t)\in D\times[0,T/2],
\]
as the mean snapshots in the offline phase, and take the nonlinear evaluation of new realization in the first half interval as new data to update the stochastic online DEIM basis matrix in the online phase. In Case II, mean of evaluation of nonlinear functions in whole time interval, i.e. $f(\bar{u}(x,t)),\, g(\bar{u}(x,t)),\, (x,t)\in D\times[0,T]$, is used in the offline phase, and online phase is the same as Case I.  In Case III, offline procedure is the same as  Case I, but we use the  nonlinear evaluation of new trajectory in the whole time interval to update online DEIM basis in online phase. And for case III, we keep the same number of snapshots  as  Case I in online phase by selecting snapshots sparsely.
Thus, for Case \uppercase\expandafter{\romannumeral1}
and Case \uppercase\expandafter{\romannumeral2},
 solutions of a new sample in the first half temporal interval demonstrate good  fitness of these two techniques, while models' predication ability are showed by the accuracy of solution in the domain  $D\times(T/2,T]$.

Figure \ref{compared solution} depicts CEM-GMsFEM solution and the  Online DEIM-MS solutions  associated to Case I, Case II and Case III.
By the figure,  the solution profiles look  the same, and   Online DEIM-MS  is able to give a good approximation.

\begin{figure}[htbp]
	\centering
	\includegraphics[width=4.2in, height=2.9in]{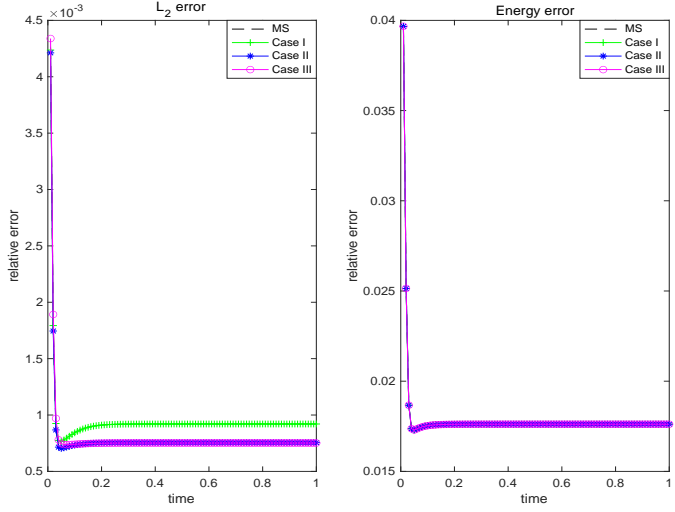}
	\caption{\textit{Relative $L_2$ errors (left) and  relative energy  errors (right) versus time $t$. }}\label{compared error}
\end{figure}

 The $L_2$ errors and energy errors of above  methods are showed in Figure \ref{compared error}.    From the figure, we see that   the $L_2$ errors are  almost less than $0.1\%$,  and  the $L_2$ error of Case \uppercase\expandafter{\romannumeral1} is the largest and the other two are smaller. This  indicates that the use of longer time information as snapshots  leads to more accurate  solution.
 Besides, we find that CEM-GMsFEM using Newton method has the smallest error. This is consistent with our expectation.
The figure shows that the errors tend to be stable as time increases. This may be because the system becomes  stationary gradually.


\subsubsection{Individual performance}\label{ex22}

In this numerical example, we are going to show the individual performance of online DEIM by presenting some new trajectories. We choose the same multiscale coefficient $\kappa$ and  $\mathcal{Q}$-Wiener process as example \ref{ex1} with noise coefficient  $Q=1,100$.  In the example, we set the nonlinear functions $f(u)=2\pi \cos(u),\, g(u)=u^2+2$ and the initial condition $u(x,0)=10\sin(2\pi x_1)\sin(2\pi x_2)$.
We solve the SPDE  using  Online DEIM  (Case \uppercase\expandafter{\romannumeral1} in {\color{black}Subsection \ref{ex21}}) combining with CEM-GMsFEM.
FEM solution  in fine grid  is  the reference solution.

%
\begin{figure}[hbtp]
	\centering
	\includegraphics[width=5.5in]{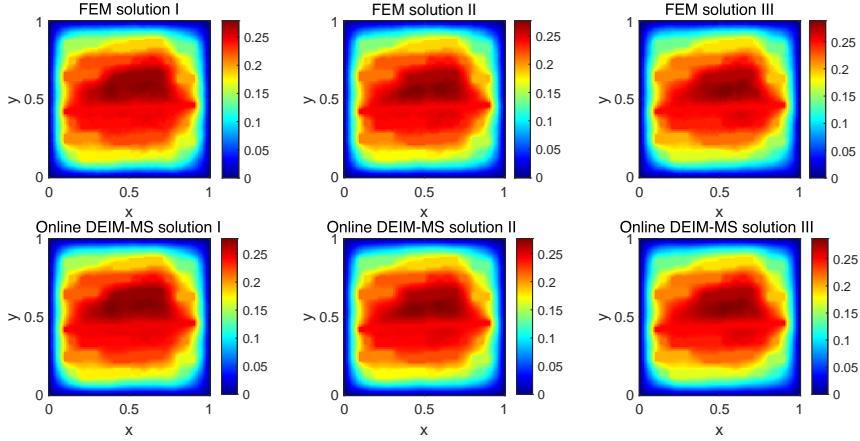}
	\caption{\textit{ Solution profiles for three arbitrary trajectories,  $Q=1$. Top: FEM reference solutions; Bottom: Online DEIM-MS solutions (\uppercase\expandafter{\romannumeral1} \uppercase\expandafter{\romannumeral2} \uppercase\expandafter{\romannumeral3} are the three different trajectories). }}\label{new8sol_Q1}
\end{figure}

 Figure \ref{new8sol_Q1} shows the solution profiles  of three different  stochastic realizations when covarience  $Q=1$. The profiles of different trajectories  are similar because of small covariance coefficient  $Q$. The figure clearly shows that the  individual performance of Online DEIM-MS solution approximates well reference solution for each  trajectory.

\begin{figure}[htbp]
	\centering
	\includegraphics[width=5.5in]{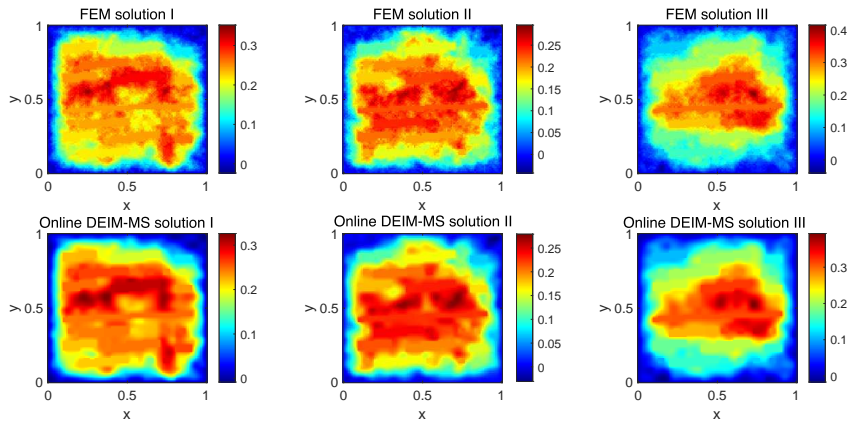}
	\caption{\textit{ Solution profiles for three arbitrary trajectories,  $Q=100$.. Top: FEM reference solutions; Bottom: Online DEIM-MS solutions (\uppercase\expandafter{\romannumeral1} \uppercase\expandafter{\romannumeral2} \uppercase\expandafter{\romannumeral3} are three different trajectories). }}\label{new8sol_Q10}
\end{figure}

Figure \ref{new8sol_Q10} depicts the solution profiles when $Q=100$.   Because the diffusion covarience is  large,  individual profiles  are quite different from the three
different realizations.
  But the Online DEIM-MS solution looks very like the FEM solutions in general. That is to say, the individual trajectory performs accurately even though the diffusion covarience  becomes  large.

\begin{figure}[htbp]
	\centering
	\includegraphics[width=4.3in, height=3in]{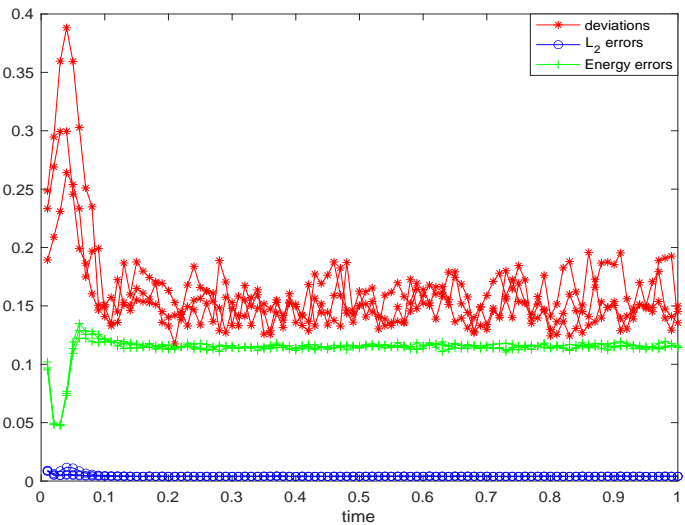}
	\caption{\textit{Relative $L_2$ errors (blue o), relative energy  errors (green +) and derivations for the three arbitrary trajectories, $Q=100$.  }}\label{new8err_Q1}
\end{figure}

 In Figure \ref{new8err_Q1},  we illustrate $L_2$ errors, energy errors and deviations  of the arbitrary  three  realizations.
Here  the deviations are given by
\[
\text{deviation}=\sqrt{\frac{\|\text{FEM solution of arbitrary realization }-\text{ mean of FEM solutions}\|_2}{\|\text{mean of FEM solutions}\|_2}}.
\]
Large deviation means that the problem has a very  different new realization from the mean information. It is obvious in Figure  \ref{new8err_Q1} that the deviations are much larger than the errors. Thus large deviations and small  errors indicate the good performance of Online DEIM-MS for new trajectories.  One can predict how the  state evolves with respect to time.

{\color{black}
\par In  Table \ref{table}, we make a comparison of the approximation error and the computational cost between stochastic Online DEIM  and traditional DEIM, referred as Offline DEIM when these methods are applied to a single trajectory. Since their offline phases are the same, we only record the time of online phases. For Offline DEIM, the CPU time is the time of CEM-GMsFEM computation with offline-reduced nonlinear functions. But for Online DEIM, the CPU time  includes both the time of online update of DEIM basis matrix and the time of MS computation with online-reduced nonlinear functions. And the relative $L_2$ error and relative energy error are both listed in the table.}

\begin{table}[htbp]
	\centering
	{\color{black}
		\caption{A comparison of CPU time and relative errors.}
		\label{table} }
	\begin{tabular}{|c|c|c|c|}
		 \hline
		 Method & CPU time(s) & $L_2$ error & Energy error\\
		 \hline
		 MS & 209.7017 & $5.5858\times 10^{-3}$ & $7.3928\times 10^{-2}$\\
		 \hline
		 Offline DEIM-MS & 53.4225 & $7.3864\times 10^{-3}$ &  $7.4025\times 10^{-2}$\\
		 \hline
		 Online DEIM-MS & 53.4835 & $6.6042\times 10^{-3}$ &  $7.3973\times 10^{-2}$ \\
		 \hline
	\end{tabular}	
\end{table}

{\color{black}
Table \ref{table} shows that Online DEIM has better accuracy while it takes slightly more time than Offline DEIM. Comparing with CEM-GMsFEM without any reduction, Online DEIM is able to significantly improve the computation efficiency and keep good accuracy.
}
\subsection{A coupled  stochastic system in porous media}\label{ex3}

In this subsection, we consider a coupled system, in which SDE is  coupled to a porous media equation. In this class of system, the fluid velocity is affected by the particle deposition, and the effect is described by a stochastic differential equation where the particle velocity is influenced by the fluid velocity in turn. That is to say, the particle dynamics (i.e.,  the stochastic differential equation in (\ref{coupled})) governs the permeability change. Thus the coupled system produces  a stochastic porous media problem in the real world. Here is a specific system:

\begin{equation}\label{coupled}
\left\{
\begin{aligned}
\frac{\partial u(x,t)}{\partial t}&= \nabla\cdot \big(\kappa(x) \nabla u\big)+10e^{2v}\sin(u) \quad in\quad D \times (0,T] ,\\
dv(x,t) &=-10(v-u)dt+\frac{1}{\sqrt{0.1}}dW(t) \quad in\quad D \times (0,T] ,\\
u(x,t)&=0 \quad\quad\quad\quad\quad\quad\quad\quad\quad\quad\quad\quad\,\, on\quad \partial D\times (0,T],\\
u(x,0)&=1 \quad\quad\quad\quad\quad\quad\quad\quad\quad\quad\quad\quad\,\, in\quad D,\\
v(x,0)&=2 \quad\quad\quad\quad\quad\quad\quad\quad\quad\quad\quad\quad\,\, in\quad D,
\end{aligned}
\right.
\end{equation}
where $u$ represents the fluid velocity and $v$ is the particle velocity. Here the permeability field $\kappa(x)$ is taken as the Tenth Society of Petroleum Engineers Comparative Solution Project (SPE10), which is shown in Figure \ref{SPE10}.
\begin{figure}[htbp]
	\centering
	\includegraphics[width=4.2in, height=3in]{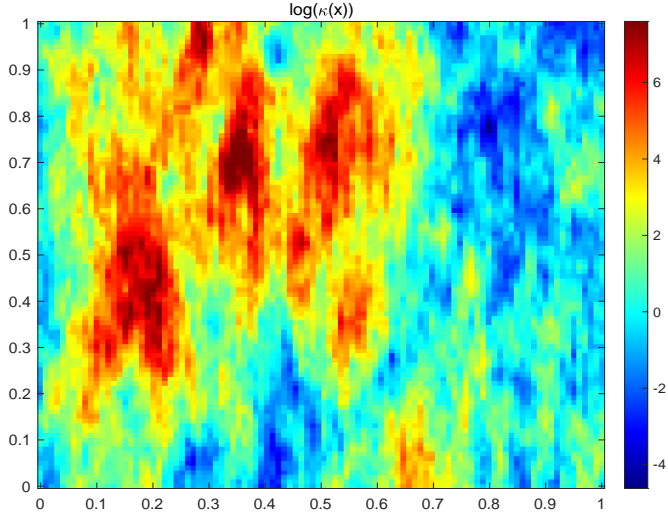}
	\caption{A permeability field (a layer from SPE10)}\label{SPE10}
\end{figure}

The model (\ref{coupled}) describes an equation with the heterogeneous diffusion $\kappa(x)$, and $e^{2v}$ characterizes  the affection by particle deposition, and  10 times of fluid velocity affects the particle velocity. We choose constant $1$ and $2$ as initial velocity of fluid and of particle respectively, because  the particle dynamics  usually occurs faster compared to fluid flow. Besides,  the long time evolution of system will cause the effect  that solution $u$ has the sharp decay to zero, and the solution becomes  small after some time.
To show the difference between  different DEIM-MS approaches, we solve the problem in domain $D=[0,1]\times [0,1]$, with time interval $t\in [0,0.1]$.
In this example, we  divide  the time into $N=100$ intervals with step size $\Delta t=0.001$, and compute $100$ trajectories' solutions to get mean snapshots, and  $Q=0.01$ for the
Wiener process $W(t)$.

 We solve the coupled system sequentially (the semilinear  PDE is solved after SDE), and we use drift-implicit Milstein scheme to solve SDE, while PDE is solved by four different methods
: MS, Offline DEIM-MS, Online DEIM-MS and FEM in fine grid.    We compute Offline DEIM-MS solution by the reduced model constructed  in the offline phase where we utilize only  the snapshots of nonlinear function $e^{2v}\sin(u)$ along mean of offline trajectories, while Online DEIM-MS solution is computed by Case \uppercase\expandafter{\romannumeral1} in Section \ref{ex2}.

\begin{figure}[hbtp]
	\centering
	\includegraphics[width=4.5in, height=3.2in]{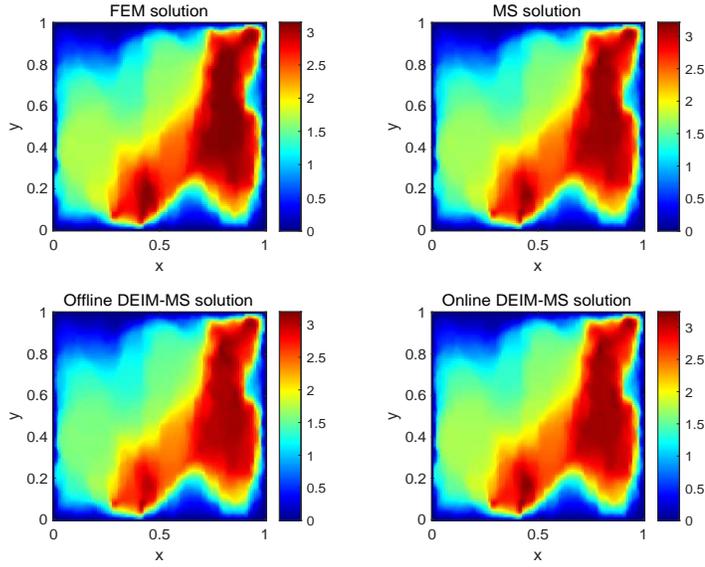}
	\caption{\textit{Solution profiles  of fluid velocity $u$}}\label{sol_u}
\end{figure}

  Figure \ref{sol_u}    shows the profiles of fluid velocity $u$.   By the figure,   Online DEIM-MS solution at time $t=0.055$ is more close to the MS and FEM solution, while Offline DEIM-MS solution  obviously differs from reference solution in some locations. This shows that  Online DEIM-MS is better than Offline DEIM-MS for  prediction.

\begin{figure}[htbp]
	\centering
	\includegraphics[width=4.2in, height=3in]{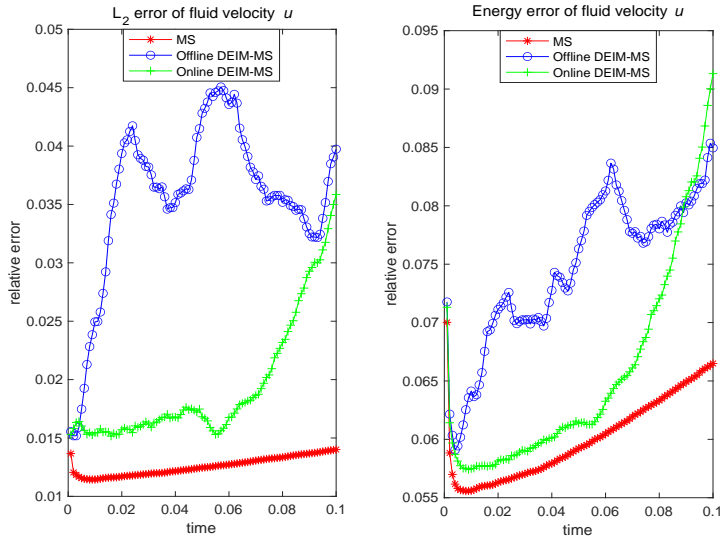}
	\caption{\textit{Relative $L_2$ errors (left) and energy errors (right) of fluid velocity $u$ versus  time $t$ }}\label{err_u}
\end{figure}
\begin{figure}[htbp]
	\centering
	\includegraphics[width=4.5in, height=3.2in]{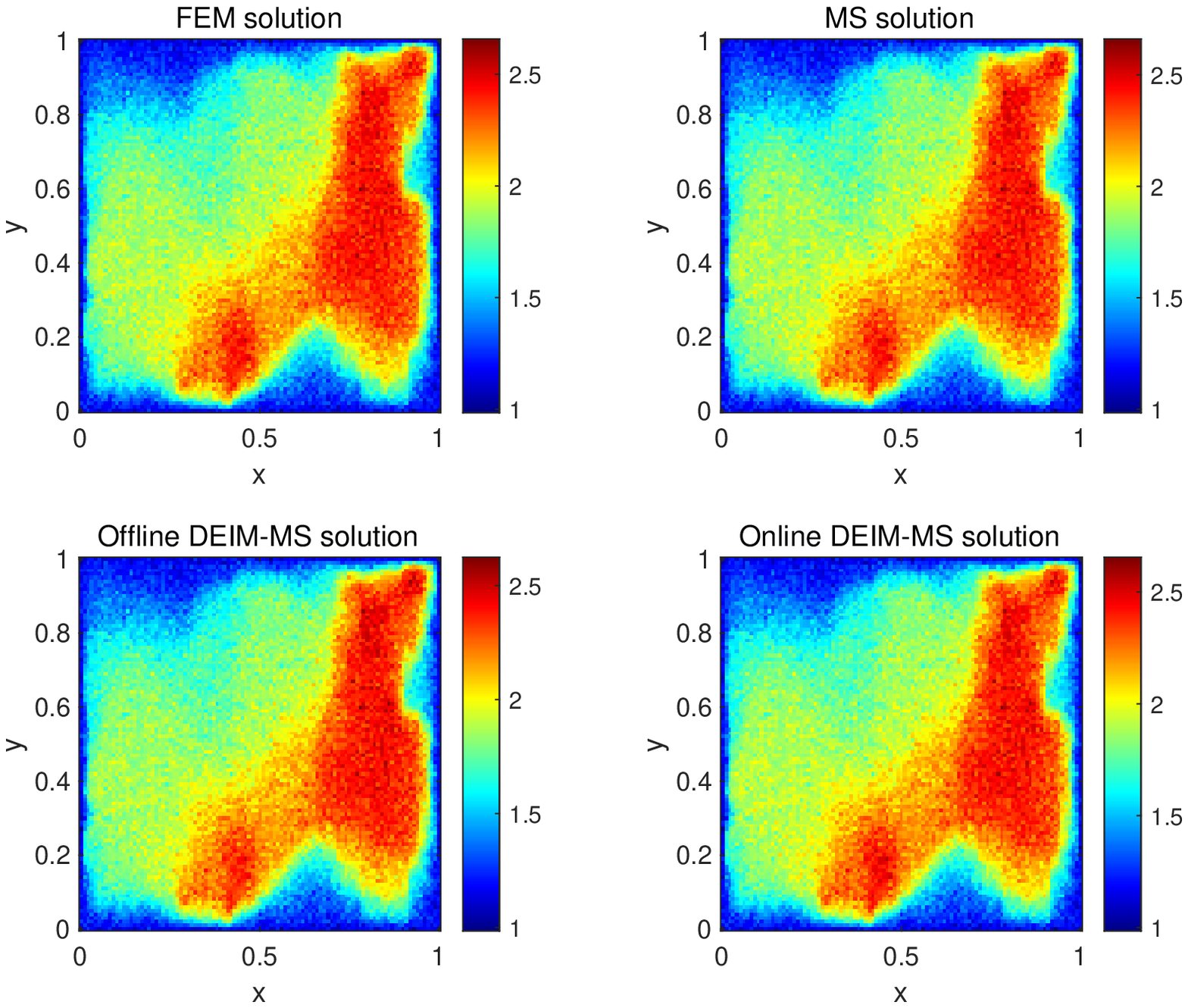}
	\caption{\textit {Solution profiles  of particle velocity $v$ at $t=0.055$.}}\label{sol_v}
\end{figure}

  Figure \ref{err_u} shows the relative errors versus time.  By the figure,  we observe that the error of MS solution is  the smallest. This  coincides with the previous examples. In the offline phase,mean information in the first half time interval is used  to construct the snapshots, compute the DEIM basis matrix and interpolation matrix. This yields a reduced model using  Offline DEIM and Online DEIM. Moreover, for Online DEIM, we also utilize the data of new path to update the basis matrix  in the online phase. Therefore, for $t\in [0,T/2)$, where $T=0.1$, Online DEIM produces the fitness of  system evolution, while for $t\in (T/2,T]$, Online DEIM can predict the propogation of system. But the prediction may perform well only in a short time interval, since the correlation will decay with the elapse of time. Comparing the error curve in Figure \ref{err_u}, we see that Online DEIM-MS is much more precise than Offline DEIM-MS for $t\in [0,T/2)$.   For $t\in (T/2,T]$, the error of Online DEIM-MS increases over time as expected. Although the error of Online DEIM-MS grows in the later half interval, it is still much less than the error of Offline DEIM-MS. This implies that  the prediction  of Online DEIM is better than Offline DEIM.  This  verifies  the accuracy of fitness and forecast of Online DEIM-MS approach.

 In Figure \ref{sol_v}, we give the profiles of particle velocity $v$ at time $t=0.055$. We note that $v$   is solved by plugging  $u$ into the stochastic equation.  The solution  $u$ is computed  by  four different methods: FEM in fine grid, MS, Offline DEIM-MS and Online DEIM-MS. Different solutions of $u$ lead to different solutions of $v$ because they are coupled each other.  By the figure,  we find  that the solution profiles   are very close to each other. Comparing with Figure \ref{sol_u},the  particle velocity $v$ is not as smooth as the fluid velocity, due to the direct impact from Brownian motion. However, the particle and the fluid velocity have the similar pattern  because the particle dynamics and the fluid dynamics interact on each other.
 We see that, for arbitrary $t\in [0,T]$, the error of Offline DEIM-MS solution is larger than that of Online DEIM-MS. The trend of error of particle velocity is consistent with the error of fluid velocity.

\begin{figure}[htbp]
	\centering
	\includegraphics[width=4.2in, height=2.6in]{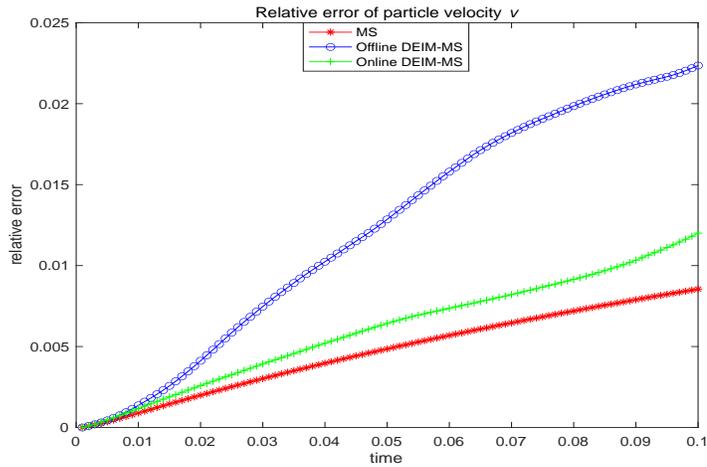}
	\caption{\textit{Relative $L_2$  errors of  the particle velocity $v$ versus  time $t$}}\label{err_v}
\end{figure}


\section{Conclusion}
We presented a stochastic online model reduction approach for multiscale nonlinear stochastic parabolic PDEs. Multiscales and nonlinearity  significantly impact on the computation using  traditional FEM
in a fine grid.  CEM-GMsFEM and DEIM can substantially improve the computation efficiency. We have carried out  the error analysis of CEM-GMsFEM for the nonlinear stochastic PDEs.
However, the semi-discrete system is still nonlinear in a high-dimensional space by CEM-GMsFEM. In order to overcome this challenge,
  we proposed the stochastic online DEIM method, which used  the offline mean information and  the online trajectory snapshots. The online DEIM significantly
  reduced the computation complexity for the nonlinear system.
 By incorporating the stochastic online DEIM  with CEM-GMsFEM,    we developed  a  stochastic multiscale  model reduction method: Online DEIM-MS.
 This method showed superiority in prediction compared with Offline DEIM approach. We presented a few numerical examples from the  porous media application to
 show effectiveness of    the   stochastic online model reduction method.


\end{document}